\def\oversortoftilde#1{\mathop{\vbox{\m@th\ialign{##\crcr\noalign{\kern3\p@}%
      \sortoftildefill\crcr\noalign{\kern3\p@\nointerlineskip}%
      $\hfil\displaystyle{#1}\hfil$\crcr}}}\limits}
\def\sortoftildefill{$\m@th \setbox\z@\hbox{$\braceld$}%
  \braceld\leaders\vrule \@height\ht\z@ \@depth\z@\hfill\braceru$}
\newtheorem{theorem}{Theorem}[section]
\newtheorem{proposition}[theorem]{Proposition}
\newtheorem{lemma}[theorem]{Lemma}
\newtheorem{definition}[theorem]{Definition}
\newtheorem{remark}[theorem]{Remark}
\newtheorem{example}[theorem]{Example}
\DeclareMathOperator{\car}{char}
\DeclareMathOperator{\diam}{diam}
\newtheorem*{tmaa*}{Theorem A}
\newtheorem*{tmab*}{Theorem B}
\newtheorem*{tmac*}{Theorem C}
\newtheorem*{tmad*}{Theorem D}
\newtheorem*{tmae*}{Theorem E}
\newtheorem*{defi*}{Definition}
\title{Wandering domains for non-archimedean quadratic rational functions}
\author{V\'ictor Nopal-Coello}
\begin{document}

\maketitle
\begin{abstract}
Let $\mathbb{C}_K$ be a complete and algebraic closed non-archimedean field with residual characteristic \\
$\car(\widetilde{\mathbb{C}}_K)=2$. In this paper we prove that there exist $a,b\in\mathbb{C}_K$ such that the rational function $R(z)=\frac{z^2-z}{bz-\frac{1}{a}}$ has wandering components in its Fatou set. 
\end{abstract}

\section*{Introduction}\label{Sec:1}

Let $\mathbb{C}_K$ be a complete and algebraic closed non-archimedean field. Given a rational function $R\in\mathbb{C}_K(z)$ we can consider the dynamical system defined by the iterates of $R$ in the projective space $\mathbb{P}(\mathbb{C}_K)$. This dynamical system gives a partition of the projective space into two sets which are invariant under $R$: the {\it Fatou} set or the stable set and the {\it Julia} set or the unstable set.

In his Ph.D. thesis, Rivera-Letelier has shown that the Fatou set has two types of periodic components: the attracting basins and indifferent components. In turn, attracting basins are either disks or Cantor type components, whereas the indifferent components can be disks ({\it Rivera disk}) or can be of the form $$\mathbb{P}(\mathbb{C}_K)-\bigcup_{i=0}^m B_i,$$where the $B_i$'s are closed balls. We called this components {\it $m$-Rivera domains}. 

The Fatou set also can contains components which are not preperiodic, these components are called {\it wandering components}.  In \cite{RLBwd}, Benedetto shows that if the residue characteristic of $\mathbb{C}_K$ is $\car(\widetilde{\mathbb{C}}_K)=p>0$ then there exists $a\in\mathbb{C}_K$ such that the Fatou set of the polynomial function $\phi(z)=(1-a)z^{p+1}+az^p$ has wandering components. 

Note that the Benedetto's examples are functions with degree $\deg(\phi)=p+1\geq3$, leaving open the question over the existence of wandering components for quadratic rational functions.  

In this paper we prove there exist rational functions $R\in\mathbb{C}_k(z)$ of degree two with wandering components in its Fatou set, under the hypothesis that $\car(\widetilde{\mathbb{C}}_K)=2$. In fact, we prove the following theorem.

\begin{tmaa*}
Let $\mathbb{C}_K$ be a complete and algebraically closed non-archimedean field with residual characteristic $\car(\widetilde{\mathbb{C}}_K)=2$. Then there exist constants $a,b\in\mathbb{C}_K$ such that the rational function $$R(z)=\frac{z^2-z}{bz-\frac{1}{a}}$$has wandering components in its Fatou set.
\end{tmaa*}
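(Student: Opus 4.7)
The plan is to adapt Benedetto's construction \cite{RLBwd} of wandering components for polynomials of degree $p+1$ in residual characteristic $p$ to the quadratic rational setting available when $p=2$. The starting point is to record the local dynamical data of $R(z)=(z^2-z)/(bz-1/a)$: a direct computation shows that $z=0$ is fixed with multiplier $R'(0)=a$, that $z=\infty$ is fixed with multiplier $b$, and that the two critical points are the roots of $abz^2-2z+1=0$. The factor $2$ is the decisive feature: in residue characteristic $2$ one has $|2|<1$, so the critical points cannot be described by the usual binomial expansion of $(1-ab)^{1/2}$, and their norms respond to $|a|$ and $|b|$ in ways that are invisible in residual characteristic zero.

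Next I would choose parameters $a,b$ with carefully prescribed norms and reductions. A natural first attempt is $|a|=1$ together with $|b|<1$ and a specified relation between the reduction $\tilde a$ and the valuation of $b$, so as to arrange (i) an attracting Rivera basin at $\infty$, (ii) an open disc $D_0$ around a suitable seed point on which $R$ acts as a small perturbation of the Frobenius-like map $z\mapsto z^2$, and (iii) an a priori bound keeping the forward orbit of $D_0$ away from both $\infty$ and the pole at $1/(ab)$. That each $D_n=R^n(D_0)$ lies in the Fatou set should then follow from Rivera-Letelier's classification, provided $D_n$ is placed inside an affinoid on which the relevant iterate of $R$ is injective.

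To turn this into a genuinely wandering orbit I would use Benedetto's recursive perturbation technique: construct inductively pairs $(a_n,b_n)\in\mathbb{C}_K^2$ such that for $R=R_{a_n,b_n}$ the first $n$ iterates $D_0,\ldots,D_n$ are pairwise disjoint Fatou discs, and arrange the induction so that $(a_{n+1},b_{n+1})$ is close enough to $(a_n,b_n)$ in the product valuation that the previously constructed discs persist under this perturbation. Completeness of $\mathbb{C}_K$ then yields a limit pair $(a_*,b_*)$, and continuity of each finite iterate of $R$ in the parameters delivers an infinite wandering orbit under $R_{a_*,b_*}$.

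The hard part is the inductive step: at each stage one must choose the parameters to push the orbit into a disc genuinely disjoint from all previous ones, without accidentally producing a periodic cycle. The mechanism must be supplied by the residual characteristic $2$, since each application of $R$ modulo the maximal ideal contains a factor of Frobenius via $z\mapsto z^2$, so that successive discs can be made to sit at strictly deeper scales. The rational rather than polynomial nature of $R$ makes the bookkeeping more delicate—both the pole at $1/(ab)$ and the fixed point at $\infty$ could in principle absorb the orbit—but the extra parameter $b$ provides exactly the freedom needed to realize the construction, which is why the theorem is stated with two free parameters rather than one.
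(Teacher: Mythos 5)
Your overall framework --- recursively perturbing parameters so that finitely many steps of the orbit persist, then passing to a limit parameter by completeness --- is indeed the skeleton of the paper's argument, which follows Benedetto. But there are two genuine gaps. First, your proposed parameter regime is wrong: you suggest $|a|=1$ and $|b|<1$, which makes $0$ an indifferent fixed point (its multiplier is $a$) and $\infty$ an attracting one. The construction actually requires $|a|>1$ and $|b|=1$, so that $0$ is repelling, $\infty$ is indifferent, and --- crucially --- both critical points $c_{1,2}=(1\pm\sqrt{1-ab})/(ab)$ lie in a single small ball on the sphere $|z|=1/\sqrt{|a|}$ (this is precisely where residual characteristic $2$ enters: $|c_1-c_2|=|2|/\sqrt{|a|}$ is small because the two critical points collide residually). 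The paper fixes $a$ once and for all with $|a|=2$, arranges that the critical ball $B_{\frac{1}{2\sqrt{2}}}(c_1)$ is periodic of period $3$, and then perturbs only $b$ inside $D_{\frac{1}{4\sqrt{2}}}(b_1)$; your plan to make $\infty$ attracting would not even put you in the $m$-Rivera-domain configuration where wandering components can occur, and moving both parameters is unnecessary.

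Second, and more seriously, you explicitly defer ``the hard part,'' but that deferred part is essentially the entire proof. The actual mechanism is an itinerary bookkeeping: one chooses integers $n_1<n_2<\cdots$ with $n_{k+1}\ge n_k+2$ and companion integers $m_k$ satisfying $2^{2m_k}\rho_{m_k}^2\cdots\rho_1^2<2^{-(n_{k+1}+1)}$, and arranges that the orbit of a point $x$ alternately spends $n_k$ steps being expanded near the repelling point $0$ (distance to $0$ doubling each step) and $3(m_k-1)$ steps being quadratically contracted around the period-$3$ critical ball, before returning to $1$. Wandering follows because the orbit of the disk $D_r(x)$ meets the spheres $\mathbb{S}_{1/(2^{n_k}\sqrt{2})}(0)$ for a strictly increasing sequence of $n_k$, so it can never revisit a region already seen, while the radius estimates guarantee the image remains a genuine disk (neither collapsing nor swallowing the pole) at every step. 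The inductive choice of $b_{k+1}\in D_{\tau_k}(b_k)$ is made by analyzing the parameter-to-phase maps $\beta\mapsto R_\beta^N(x)$ and showing they carry small parameter disks onto controlled phase disks, which is what lets one hit the critical ball and then a preimage of $1$ at the prescribed times. None of this is supplied, or even identified, in your proposal, and the vague appeal to ``Frobenius pushing discs to deeper scales'' does not by itself rule out preperiodicity or keep the orbit away from the pole at $1/(ab)$.
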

We prove this theorem following similar ideas than Benedetto in \cite{RLBwd}. 

We divide this work in two sections. 
\begin{itemize}
\item In Section \ref{Sec:2} we give a brief account about quadratic rational functions and the dynamics generated by these functions, we refer the interested reader to \cite{NCVT} for a deeper analysis about quadratic rational functions.
\item In Section \ref{SecW} we give the necessary tools to prove Theorem A and then we prove this theorem. 
\end{itemize}

\newcounter{ns}
\section{Preliminaries}
\label{Sec:2}

We  start with a brief account on the basics of non-archimedean fields and introduce notation. 

\subsection{Disks, balls and spheres}

Let $\mathbb{C}_K$ be a complete and algebraically closed non-archimedean field. Let $a\in\mathbb{C}_k$ and $r>0$, then we define the disks and balls with center in $a$ and radius $r$ as follows $$D_r(a)=\{z\in\mathbb{C}_K:|z-a|<r\}\hspace{5mm}\text{and}\hspace{5mm}B_r(a)=\{z\in\mathbb{C}_K:|z-a|\leq r\}.$$

The non-archimedean property implies that every point of a disk (or ball) is a center, that is, if $b\in D_r(a)$ then $D_r(b)=D_r(a)$. Analogously, if $b\in B_r(a)$ then $B_r(b)=B_r(a)$. This implies that if two disks (or balls) has nonempty intersection then one contains the other.

Lemma 2.1 in \cite{JRLT} implies that if $R\in\mathbb{C}_K(z)$ is a rational function and $D_r(a)\in\mathbb{C}_K$ is a disk, then $R(D_r(a))$ is another disk or $R(D_r(a))=\mathbb{P}(\mathbb{C}_K)$, in particular if $R$ does not have poles in $D_r(a)$ then $R(D_r(a))$ is another disk.  

For $a\in\mathbb{C}_K$ and $r>0$, also we define the sphere with center in $a$ and radius $r$ as $$\mathbb{S}_r(a)=\{z\in\mathbb{C}_k:|z-a|=r\}.$$Note that $\mathbb{S}_r(a)\cap D_r(a)=\emptyset$ and $B_r(a)=\mathbb{S}_r(a)\cup D_r(a)$.

\subsection{Quadratic rational functions}

Let $R\in\mathbb{C}_K(z)$ be quadratic rational function. In \cite{NCVT} we have proved the existence of a trichotomy for the dynamics of $R$, that is, if $R$ is a quadratic rational function then
\begin{itemize}
    \item $R$ is simple (there exist an automorphism $\varphi$ such that the composition $\varphi\circ R\circ\varphi^{-1}$ has good reduction). In this case the Julia set is an empty set, or
    \item the Fatou set of $R$ consist only of an attracting basin which is of Cantor type. In this case the Julia set is nonempty, or
    \item the Fatou set contains an $m$-Rivera domain (for some $m\geq1$) and its Julia set is nonempty.
\end{itemize}

In the two first cases, the Fatou set does not contains wandering components, therefore we focus on only in the third case.

Let $R\in\mathbb{C}_K(z)$ be a rational function, following the notation given in \cite{NCVT}, if the Fatou set of $R$ contains an $m$-Rivera domain then we can write $R$ as 
\begin{equation}
\label{R}
R(z)=\frac{z^2-z}{bz-\frac{1}{a}},
\end{equation}
with $a,b\in\mathbb{C}_K$, $|a|>1$ and $|b|=1$. Note that $R(1)=0$ and $0$ is a fixed point with multiplier $a$, therefore $0$ is a repelling fixed point; and $\infty$ is a fixed point with multiplier $b$, therefore $\infty$ is an indifference fixed point. 

Let $c_1,c_2\in\mathbb{C}_K$ be the critical points of $R$. It is easy to check that $c_1=\frac{1+\sqrt{1-ab}}{ab}$ and $c_2=\frac{1-\sqrt{1-ab}}{ab}$, and therefore $|c_1|=|c_2|=\frac{1}{\sqrt{|a|}}$. Note that if $\car(\widetilde{\mathbb{C}}_K)>2$ then $|c_1-c_2|=\frac{1}{\sqrt{|a|}}$, while if $\car(\widetilde{\mathbb{C}}_K)=2$ then $|c_1-c_2|=\frac{1}{2\sqrt{|a|}}$.

By Lemma 3.9 in \cite{NCVT} we know the dynamics of $R$ in $\mathbb{P}(\mathbb{C}_K)$, in fact we have that 
\begin{itemize}
    \item if $D_r(x)\subset D_\frac{1}{|a|}(0)$ then $R(D_r(x))=D_{|a|r}(R(x))$,
    \item if $D_r(x)\subset D_\frac{1}{\sqrt{|a|}}(1)$ then $R(D_r(x))=D_{r}(R(x))$, and
    \item in $D_\frac{1}{\sqrt{|a|}}(c_1)$, $R$ is contracting. 
\end{itemize}

Moreover, we have the following lemma.

\begin{lemma}[Lemma 3.9, in \cite{NCVT}]
\label{LR}
Let $x,y\in\mathbb{C}_K$ and let $R\in\mathbb{C}_K(z)$ be a rational function as in \eqref{R}.
\begin{enumerate}
\item If $|x|,|y|>\frac{1}{\sqrt{|a|}}$ then $$|R(x)-R(y)|=|x-y|.$$ Therefore if $D_r(x)\subset\mathbb{C}_K-B_{1/\sqrt{|a|}}(0)$ then $R(D_r(x))=D_r(R(x))$, i.e., $R$ is an isometry.
\item If $|x|,|y|\leq\frac{1}{|a|}$ and $x,y\notin D_{1/|a|}(\frac{1}{ab})$ then $$|R(x)-R(y)|=|a||x-y|,$$ hence in $D_{1/|a|}(0)$, $R$ is expanding. Moreover if $D_r(x)\subset D_{1/|a|}(0)$ then $R(D_r(x))=D_{|a|r}(R(x))$.
\item If $\frac{1}{|a|}<|x|\leq\frac{1}{\sqrt{|a|}}$, $y\in D_{|x|}(x)$ and $x\notin D_\frac{1}{\sqrt{|a|}}(c_1)\cup D_\frac{1}{\sqrt{|a|}}(c_2)$ then $$|R(x)-R(y)|=\frac{|x-y|}{|a||xy|}.$$Therefore if $r\leq|x|$ then $R(D_r(x))=D_\delta(R(x))$ where $\delta=\frac{r}{|a||x|^2}$. In particular if $|x|=\frac{1}{\sqrt{|a|}}$ then $|R(x)-R(y)|=|x-y|$, hence $R(D_{\frac{1}{\sqrt{|a|}}}(x))=D_{\frac{1}{\sqrt{|a|}}}(R(x))$, that is $R$ is an isometry.
\item If $x,y\in D_\frac{1}{\sqrt{|a|}}(c_1)$ or $x,y\in D_\frac{1}{\sqrt{|a|}}(c_2)$ then $$|R(x)-R(y)|\leq|a||x-y|\max\{|x-c_1||x-c_2|,|x-y||bx-\frac{1}{a}|\}.$$In particular $|R(x)-R(y)|<|x-y|$, i.e. $R$ is contracting. But nevertheless $$R\left(D_{\frac{1}{\sqrt{|a|}}}(c_1)\right)=D_{\frac{1}{\sqrt{|a|}}}(R(c_1))\text{ and }R\left(D_{\frac{1}{\sqrt{|a|}}}(c_2)\right)=D_{\frac{1}{\sqrt{|a|}}}(R(c_2)).$$
\end{enumerate}
\end{lemma}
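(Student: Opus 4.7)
The plan is to reduce everything to a single algebraic identity for $R(x)-R(y)$ and then read off each of the four estimates by applying the ultrametric triangle inequality on the corresponding region. Clearing denominators in $R(x)-R(y)=\frac{x^2-x}{bx-1/a}-\frac{y^2-y}{by-1/a}$ and factoring out $(x-y)$ gives
\[
R(x)-R(y)\;=\;\frac{a\,(x-y)\,\bigl(abxy-x-y+1\bigr)}{(abx-1)(aby-1)}.
\]
Write $N(x,y):=abxy-x-y+1$. Evaluating at $y=x$ gives $N(x,x)=abx^2-2x+1=ab(x-c_1)(x-c_2)$, which will be decisive for part~(4). Throughout I use $|a|>1$, $|b|=1$, and the fact that $|u-v|=\max(|u|,|v|)$ whenever $|u|\neq|v|$.

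Parts (1)--(3) reduce to direct computations of $|N(x,y)|$ and $|(abx-1)(aby-1)|$. In part~(1), $|x|,|y|>1/\sqrt{|a|}$ forces $|abx|,|aby|>1$ so the denominator has norm $|a|^2|xy|$, and $|abxy|$ strictly dominates both $|x+y|$ and $1$, so $|N(x,y)|=|abxy|=|a||xy|$, giving $|R(x)-R(y)|=|x-y|$. In part~(2), $|x|,|y|\leq 1/|a|$ together with $x,y\notin D_{1/|a|}(1/(ab))$ force $|abx-1|=|aby-1|=1$ while $|abxy|\leq 1/|a|<1=|x+y-1|$, so $|N|=1$ and the ratio becomes $|a||x-y|$. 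Part~(3) is the same idea applied to $y=x+h$ with $|h|<|x|$; the expansion $N(x,y)=N(x,x)+h(abx-1)$, together with $|N(x,x)|=|a||x-c_1||x-c_2|=1$ (which uses $x\notin D_{1/\sqrt{|a|}}(c_i)$ and the ultrametric) and $|h(abx-1)|<1$, yields $|N(x,y)|=1$; combined with $|(abx-1)(aby-1)|=|a|^2|x|^2$ this gives the stated distortion.

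Part~(4) is the most delicate, because on $D_{1/\sqrt{|a|}}(c_i)$ one has $|abx-1|=|aby-1|=\sqrt{|a|}$ but $|N(x,x)|$ may be arbitrarily small. The decomposition $N(x,y)=ab(x-c_1)(x-c_2)+(y-x)(abx-1)$ and the triangle inequality give
\[
|N(x,y)|\;\leq\;|a|\,\max\bigl\{\,|x-c_1||x-c_2|,\ |x-y|\,|bx-1/a|\,\bigr\},
\]
and dividing by $|(abx-1)(aby-1)|=|a|$ produces the stated bound on $|R(x)-R(y)|$. Contraction then follows because $|x-c_1|,|x-c_2|,|x-y|$ are all $<1/\sqrt{|a|}$ and $|bx-1/a|=1/\sqrt{|a|}$, forcing both entries of the max (after multiplication by $|a|$) to be $<1$. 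I expect the final assertion $R(D_{1/\sqrt{|a|}}(c_i))=D_{1/\sqrt{|a|}}(R(c_i))$ to be the main obstacle: the inclusion $\subseteq$ is immediate from the contraction estimate applied with $y=c_i$, but the reverse inclusion cannot be obtained from ultrametric bookkeeping alone. I would derive it by appealing to the image-of-a-disk principle (Lemma~2.1 of \cite{JRLT}), which forces $R$ to send the disk to another disk, combined with a local degree argument — $R$ is $2$-to-$1$ on the critical disk — to pin down the radius at the maximum value $1/\sqrt{|a|}$ provided by the bound above.
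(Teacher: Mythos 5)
The paper does not actually prove this statement: it is imported verbatim from Lemma 3.9 of \cite{NCVT}, so there is no in-text proof to compare against. Judged on its own, your argument is correct and is the natural one. The factorization $R(x)-R(y)=\frac{a(x-y)N(x,y)}{(abx-1)(aby-1)}$ with $N(x,y)=abxy-x-y+1$ is right, the identification $N(x,x)=ab(x-c_1)(x-c_2)$ matches the critical equation $abz^2-2z+1=0$, and the region-by-region norm computations in parts (1)--(3) all check out (in (3) one uses $|x-c_i|=1/\sqrt{|a|}$, forced by $x\notin D_{1/\sqrt{|a|}}(c_i)$ together with $|x|\leq 1/\sqrt{|a|}=|c_i|$, exactly as you say). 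The decomposition $N(x,y)=ab(x-c_1)(x-c_2)+(y-x)(abx-1)$ is also precisely what yields the two-term maximum in part (4) and, as a byproduct, the paper's unproved addendum that equality holds when the two terms have distinct norms. Two small remarks on the surjectivity claims. First, the same issue you flag in part (4) is already present in the disk-image statements of parts (1)--(3): knowing $|R(x)-R(y)|=\lambda|x-y|$ on a disk gives only containment of the image in a disk of radius $\lambda r$, and one does need the disk-image principle (Lemma 2.1 of \cite{JRLT}, applicable since the pole $1/(ab)$ lies outside all the disks in question) to get equality. Second, in part (4) the local-degree argument is not needed: taking $x=c_1$ makes the first term of the max vanish, so the addendum gives the exact formula $|R(c_1)-R(y)|=\sqrt{|a|}\,|c_1-y|^2$; since the value group of $\mathbb{C}_K$ is divisible, every radius $<1/\sqrt{|a|}$ is attained, and combined with the fact that the image is a disk this pins it down as $D_{1/\sqrt{|a|}}(R(c_1))$.
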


Further, if $|x-c_1||x-c_2|\neq|x-y||bx-\frac{1}{a}|$ then $$|R(x)-R(y)|=|a||x-y|\max\{|x-c_1||x-c_2|,|x-y||bx-\frac{1}{a}|\}.$$

\section{Wandering domains}
\label{SecW}
Let $\mathbb{C}_K$ be a complete and algebraically closed non-archimedean  field with 
residual characteristic $\car(\widetilde{\mathbb{C}}_K)=2$.

Let $a\in\mathbb{C}_K$ be a fixed number such that $|a|>1$. For each parameter $b\in\mathbb{C}_K$, with $|b|=1$, let us define the rational function $$R_b(z)=\frac{z^2-z}{bz-\frac{1}{a}}.$$For each rational function $R_b(z)$ let us denote its critical points by $c_1^b$ and $c_2^b$, and by $w_1^b$ and $w_2^b$ the preimages of $1$.

The following lemma give us the distance between the images of the same point $z$ under two different functions $R_b$ and $R_\beta$ with $|b|=|\beta|=1$.  

\begin{lemma}
\label{RemarkRb}
Let $R_b(z)=\frac{z^2-z}{bz-\frac{1}{a}}$ and $R_\beta(z)=\frac{z^2-z}{\beta z-\frac{1}{a}}$ quadratic rational functions with $|b|=|\beta|=1$ and $|a|>1$, then
\begin{enumerate}
\item if $|z|=\frac{1}{\sqrt{|a|}}$ then $|R_b(z)-R_\beta(z)|=|b-\beta|$.
\item if $z\in D_\frac{1}{\sqrt{2}}(1)$ then $|R_b(z)-R_\beta(z)|=|z-1||b-\beta|$.
\item if $|z|<\frac{1}{|a|}$ then $|R_b(z)-R_\beta(z)|=|a|^2|z|^2|b-\beta|$.
\end{enumerate}
\end{lemma}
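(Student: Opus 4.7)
\emph{Strategy.} My plan is to unify all three parts under a single algebraic identity and then evaluate the resulting absolute value case by case using only the strong triangle inequality. Subtracting the two rational functions over a common denominator, I expect to obtain
\begin{equation*}
R_b(z) - R_\beta(z) \;=\; \frac{z^2(z-1)(\beta-b)}{\bigl(bz - \tfrac{1}{a}\bigr)\bigl(\beta z - \tfrac{1}{a}\bigr)},
\end{equation*}
and hence the master formula
\begin{equation*}
|R_b(z) - R_\beta(z)| \;=\; \frac{|z|^2\,|z-1|\,|b-\beta|}{\bigl|bz - \tfrac{1}{a}\bigr|\,\bigl|\beta z - \tfrac{1}{a}\bigr|}.
\end{equation*}
From this identity all three claims reduce to evaluating four absolute values, using only $|b|=|\beta|=1$ and $|1/a|=1/|a|<1$.

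\emph{Case analysis.} In (1), since $|z|=1/\sqrt{|a|}>1/|a|=|1/a|$, the strong triangle inequality will give $|bz-\tfrac{1}{a}|=|bz|=1/\sqrt{|a|}$ and likewise for the $\beta$ factor; moreover $|z|<1$ forces $|z-1|=1$, so after substitution everything cancels except $|b-\beta|$. In (2), the hypothesis yields $|z-1|<1/\sqrt{2}<1$, hence $|z|=1$ and $|bz-\tfrac{1}{a}|=|\beta z-\tfrac{1}{a}|=1$; the master formula then collapses to $|z-1|\,|b-\beta|$. In (3), $|z|<1/|a|$ gives $|bz-\tfrac{1}{a}|=|\beta z-\tfrac{1}{a}|=1/|a|$, while $|z-1|=1$, so the denominator is $1/|a|^2$ and the expression becomes $|a|^2|z|^2|b-\beta|$.

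\emph{Obstacle.} I do not anticipate any serious difficulty: the whole argument consists of the one algebraic identity plus three routine applications of the ultrametric inequality. The only point needing care is that in each sum $|bz-\tfrac{1}{a}|$ one summand must strictly dominate the other, so that the strong triangle inequality becomes an equality rather than just an inequality; this is immediate in every case from the strict inequality on $|z|$ combined with $|a|>1$.
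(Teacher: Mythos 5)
Your proposal is correct and follows essentially the same route as the paper: both derive the master identity $|R_b(z)-R_\beta(z)|=\frac{|z|^2|z-1||b-\beta|}{|bz-\frac{1}{a}||\beta z-\frac{1}{a}|}$ by subtracting over a common denominator and then evaluate each factor case by case with the ultrametric equality. The individual evaluations ($|z-1|=1$, $|bz-\tfrac{1}{a}|$, etc.) all match the paper's, so nothing further is needed.
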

\begin{proof}
Note that
\begin{align*}
|R_b(z)-R_\beta(z)|&=\left|\frac{z^2-z}{bz-\frac{1}{a}}-\frac{z^2-z}{\beta z-\frac{1}{a}}\right|\\
   &=|z||z-1|\frac{|\beta z-\frac{1}{a}-bz+\frac{1}{a}|}{|(bz-\frac{1}{a})(\beta z-\frac{1}{a})|}\\
   &=\frac{|z|^2|z-1||b-\beta|}{|bz-\frac{1}{a}||\beta z-\frac{1}{a}|}.
\end{align*}
Therefore, 
\begin{enumerate}
\item if $|z|=\frac{1}{\sqrt{|a|}}$ then $|z|^2=\frac{1}{|a|}$, $|z-1|=1$, $\left|bz-\frac{1}{a}\right|=\left|\beta z-\frac{1}{a}\right|=\frac{1}{\sqrt{|a|}}$ and hence $$|R_b(z)-R_\beta(z)|=\frac{\frac{1}{|a|}\cdot1\cdot|b-\beta|}{\frac{1}{\sqrt{|a|}}\cdot\frac{1}{\sqrt{|a|}}}=|b-\beta|.$$
\item if $z\in D_\frac{1}{\sqrt{2}}(1)$ then $|z|^2=1$, $\left|bz-\frac{1}{a}\right|=\left|\beta z-\frac{1}{a}\right|=1$ and hence $$|R_b(z)-R_\beta(z)|=\frac{1\cdot|z-1||b-\beta|}{1\cdot1}=|z-1||b-\beta|.$$
\item if $|z|<\frac{1}{|a|}$ then $|z-1|=1$, $\left|bz-\frac{1}{a}\right|=\left|\beta z-\frac{1}{a}\right|=\frac{1}{|a|}$ and hence $$|R_b(z)-R_\beta(z)|=\frac{|z|^2\cdot1\cdot|b-\beta|}{\frac{1}{|a|}\cdot\frac{1}{|a|}}=|a|^2|z|^2|b-\beta|.$$
\end{enumerate}
\end{proof}

In order to find a quadratic rational function with wandering components, we start with a quadratic rational function $R_{b_1}$ such that the ball $B_\frac{1}{2\sqrt{2}}(c_1^{b_1})$ is periodic of period $3$. Then, we vary the parameter $b_1$ (appropriately) until we find a suitable parameter $b$ such that the quadratic rational function $R_b(z)$ contains wandering components in its Fatou set. 

Note that, by Lemma \ref{LR}, it is enough to prove that $R_{b_1}^3(c_1^{b_1})\in B_{\frac{1}{2\sqrt{2}}}(c_1^{b_1})$ to conclude that $$R_{b_1}^3\left(B_{\frac{1}{2\sqrt{2}}}(c_1^{b_1})\right)=B_{\frac{1}{2\sqrt{2}}}(c_1^{b_1}).$$

\begin{example}
If $b_1=-1$ and $a=-1+\frac{\sqrt{-1}}{2}$, then $|b|=1$, $|a|=2$ and $$R_{b_1}(z)=\frac{z^2-z}{b_1z-\frac{1}{a}}=\frac{z^2-z}{-z-\frac{2}{-2+\sqrt{-1}}}.$$Note that a critical point of $R$ is $$c_1^{b_1}=1+\sqrt{-1},$$and $|c_1^b-c_2^b|=\frac{1}{2\sqrt{2}}$. 

It is easy to check that 
\begin{itemize}
    \item $R_{b_1}(c_1^{b_1})=-1-2\sqrt{-1}$,
    \item $R^2_{b_1}(c_1^{b_1})=\frac{6}{5}+\frac{26\sqrt{-1}}{15}$,
    \item $R^3_{b_1}(c_1^{b_1})=-\frac{599}{545}-\frac{3929\sqrt{-1}}{1635}$,
\end{itemize}
then $$|R^3_{b_1}(c_1^{b_1})-c_1^{b_1}|=\left|-\frac{599}{545}-\frac{3929\sqrt{-1}}{1635}-1-\sqrt{-1}\right|=\left|-\frac{1144}{545}-\frac{5564}{1635}\sqrt{-1}\right|=\frac{1}{4}<\frac{1}{2\sqrt{2}}.$$

By 1., 2., and 4. in Lemma \ref{LR} we have that $R^3_{b_1}(B_\frac{1}{2\sqrt{2}}(c_1^b))=B_\frac{1}{2\sqrt{2}}(R^3_{b_1}(c_1^{b_1}))$ and since $|R^3_{b_1}(c_1^b)-c_1^{b_1}|<\frac{1}{2\sqrt{2}}$ then $$R^3_{b_1}(B_\frac{1}{2\sqrt{2}}(c_1^{b_1}))=B_\frac{1}{2\sqrt{2}}(c_1^{b_1}).$$
$\hfill\square$
\end{example}

In order to facilitate the operations, we start with a quadratic rational function
\begin{equation}
    \label{Ecb1}
    R_{b_1}(z)=\frac{z^2-z}{b_1z-\frac{1}{a}}\in\mathbb{C}_K(z)   
\end{equation}
with $|b_1|=1$, $|a|=2$, such that $\displaystyle{R^3_{b_1}\left(B_{\frac{1}{2\sqrt{2}}}(c_1^{b_1})\right)=B_{\frac{1}{2\sqrt{2}}}(c_1^{b_1})}$. Note that in this case $$|R_b(x)-1|=|R_b^2(x)|=\frac{1}{2\sqrt{2}},$$for all $x\in B_\frac{1}{2\sqrt{2}}(c_1^b)$.

Our goal is find a parameter $b\in\mathbb{C}_K$ such that $R_b$ contains a wandering component in its Fatou set, in fact, we will prove that $b\in D_\frac{1}{4\sqrt{2}}(b_1)$. Therefore, we start our analysis studying the quadratic rational functions $R_\beta$ with $\beta\in D_\frac{1}{4\sqrt{2}}(b_1)$.

\begin{lemma}
\label{LCrit}
Let $R_{b_1}$ the quadratic rational function given in \eqref{Ecb1}. Then $$B_{\frac{1}{2\sqrt{2}}}(c_1^\beta)=B_{\frac{1}{2\sqrt{2}}}(c_1^{b_1})\text{ and }R^3_\beta\left(B_{\frac{1}{2\sqrt{2}}}(c_1^\beta)\right)=B_{\frac{1}{2\sqrt{2}}}(c_1^\beta)$$for all $\beta\in D_{\frac{1}{4\sqrt{2}}}(b_1)$. 
\end{lemma}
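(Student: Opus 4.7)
The plan is to prove the two assertions in turn: first (i) $B_{1/(2\sqrt 2)}(c_1^\beta)=B_{1/(2\sqrt 2)}(c_1^{b_1})$, a statement about critical-point motion under parameter variation, and then (ii) invariance of this common ball under $R_\beta^3$. For (ii) I exploit the same principle applied to $R_{b_1}$ in the paragraph preceding this lemma: tracing through Lemma~\ref{LR}, $R_\beta^3$ must send $B_{1/(2\sqrt 2)}(c_1^\beta)$ onto a ball of radius $1/(2\sqrt 2)$, so it will suffice to verify $R_\beta^3(c_1^\beta)\in B_{1/(2\sqrt 2)}(c_1^\beta)$.

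For (i), note that the critical points of $R_b$ are precisely the roots of the polynomial $P_b(z):=abz^2-2z+1$. A direct evaluation gives $P_\beta(c_1^{b_1})=P_\beta(c_1^{b_1})-P_{b_1}(c_1^{b_1})=a(\beta-b_1)(c_1^{b_1})^2$, which has norm $|\beta-b_1|$ because $|c_1^{b_1}|^2=1/|a|$. Factoring $P_\beta(z)=a\beta(z-c_1^\beta)(z-c_2^\beta)$ and, without loss of generality, labeling $c_1^\beta$ as the root of $P_\beta$ nearer to $c_1^{b_1}$, we obtain $|c_1^{b_1}-c_1^\beta|^2\le|\beta-b_1|/2<1/(8\sqrt 2)<1/8$, so $|c_1^{b_1}-c_1^\beta|<1/(2\sqrt 2)$; the ultrametric property now gives the required equality of balls.

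For (ii), set $\epsilon:=|\beta-b_1|<1/(4\sqrt 2)$, $x_k:=R_{b_1}^k(c_1^{b_1})$, and $y_k:=R_\beta^k(c_1^\beta)$. The hypothesis on $R_{b_1}$ provides $|x_0|=1/\sqrt 2$, $|x_1-1|=1/(2\sqrt 2)$, $|x_2|=1/(2\sqrt 2)$, and $x_3\in B_{1/(2\sqrt 2)}(c_1^{b_1})$. I would bound $|y_k-x_k|$ inductively via
\[
|y_{k+1}-x_{k+1}|\le\max\bigl(|R_\beta(y_k)-R_\beta(x_k)|,\,|R_\beta(x_k)-R_{b_1}(x_k)|\bigr),
\]
controlling the first term through Lemma~\ref{LR} (its final formula at $k=0$, part~1 at $k=1$, part~2 at $k=2$, yielding respectively a bound quadratic in $|c_1^\beta-c_1^{b_1}|$, then $|y_1-x_1|$, then $2|y_2-x_2|$) and the second through Lemma~\ref{RemarkRb} (parts~1, 2, 3 at $k=0,1,2$, yielding $\epsilon$, $\epsilon/(2\sqrt 2)$, and $\epsilon/2$ respectively). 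Chaining ultrametrically one gets $|y_1-x_1|=\epsilon$, $|y_2-x_2|=\epsilon$, and $|y_3-x_3|\le 2\epsilon<1/(2\sqrt 2)$. Since $x_3\in B_{1/(2\sqrt 2)}(c_1^{b_1})=B_{1/(2\sqrt 2)}(c_1^\beta)$ by (i), we conclude $y_3\in B_{1/(2\sqrt 2)}(c_1^\beta)$, and therefore $R_\beta^3(B_{1/(2\sqrt 2)}(c_1^\beta))=B_{1/(2\sqrt 2)}(c_1^\beta)$ by the reduction above.

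The principal technical point---and main potential obstacle---is verifying at each step that $y_k$ lies in the same region of $\mathbb{C}_K$ as $x_k$, so that the \emph{same} parts of Lemmas~\ref{LR} and~\ref{RemarkRb} apply to the orbits under both parameters. Fortunately the restriction $\beta\in D_{1/(4\sqrt 2)}(b_1)$ makes $|y_k-x_k|\le\epsilon<1/(4\sqrt 2)$, which is strictly smaller than $|x_1-1|=|x_2|=1/(2\sqrt 2)$; the ultrametric then forces $|y_1-1|=1/(2\sqrt 2)$ and $|y_2|=1/(2\sqrt 2)$, placing them in exactly the regions required.
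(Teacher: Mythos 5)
Your proof is correct and takes essentially the same route as the paper: both parts rest on the same two ingredients (Lemma~\ref{LR} for the orbit under a fixed parameter and Lemma~\ref{RemarkRb} for comparing parameters), applied at the same three stages with the same numerics, followed by the same ``it suffices that $R_\beta^3(c_1^\beta)$ lands in the ball'' reduction. The only cosmetic differences are that you locate $c_1^\beta$ via the factorization $|c_1^{b_1}-c_1^\beta|\,|c_1^{b_1}-c_2^\beta|=|\beta-b_1|/|a|$ where the paper instead shows $f_\beta$ maps $B_{1/(2\sqrt{2})}(c_1^{b_1})$ onto $B_{1/8}(0)\ni 0$, and that you propagate closeness of the orbit points $R_\beta^k(c_1^\beta)$ and $R_{b_1}^k(c_1^{b_1})$ where the paper propagates equality of the image balls directly.
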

\begin{proof}
For each $\beta\in D_{\frac{1}{4\sqrt{2}}}(b_1)$, let us consider the function $f_\beta(z)=\beta z^2-\frac{2z}{a}+\frac{1}{a}$. Note that the zeros of $f_\beta$ are the critical points of $R_\beta$.

Let $x,y\in\mathbb{C}_K$ be points such that $|x|=|y|=\frac{1}{\sqrt{2}}$ and $|x-y|<\frac{1}{2\sqrt{2}}$. Then $|x+y|=\frac{1}{2\sqrt{2}}$ and hence $$|f_\beta(x)-f_\beta(y)|=\left|\beta x^2-\frac{2x}{a}-\beta y^2+\frac{2y}{a}\right|=|x-y|\left|\beta(x+y)-\frac{2}{a}\right|=\frac{|x-y|}{2\sqrt{2}}.$$Therefore, if $|x|=\frac{1}{\sqrt{2}}$ then 
\begin{equation}
\label{Ecf}
    f_\beta(D_{\frac{1}{2\sqrt{2}}}(x))=D_{\frac{1}{8}}(f_\beta(x)).
\end{equation} In particular $f_\beta\left(B_{\frac{1}{2\sqrt{2}}}(c_1^\beta)\right)=B_{\frac{1}{8}}(0).$ The last is true for all $\beta\in D_{\frac{1}{4\sqrt{2}}}(b_1)$. 

Let $\beta\in D_{\frac{1}{4\sqrt{2}}}(b_1)$ be a point, then $\beta=b_1+\delta$ with $|\delta|<\frac{1}{4\sqrt{2}}$. On the one hand, we have that $f_\beta(B_{\frac{1}{2\sqrt{2}}}(c_1^\beta))=B_{\frac{1}{8}}(0)$. And, on the other hand, we have that 
\begin{align*}
|f_\beta(c_1^{b_1})|&=|\beta(c_1^{b_1})^2-\frac{2c_1^{b_1}}{a}+\frac{1}{a}|=|(b_1+\delta)(c_1^{b_1})^2-\frac{2c_1^{b_1}}{a}+\frac{1}{a}|\\
    &=|b_1(c_1^{b_1})^2-\frac{2c_1^{b_1}}{a}+\frac{1}{a}+\delta(c_1^{b_1})^2|=|f_{b_1}(c_1^{b_1})+\delta(c_1^{b_1})^2|\\
    &=|\delta(c_1^{b_1})^2|<\frac{1}{4\sqrt{2}}\cdot\frac{1}{2}=\frac{1}{8\sqrt{2}}<\frac{1}{8}.
\end{align*}
This implies that $f_\beta(c_1^{b_1})\in B_\frac{1}{8}(0)$, therefore we have that $f_\beta(B_\frac{1}{2\sqrt{2}}(c_1^{b_1}))=B_\frac{1}{8}(f_\beta(c_1^{b_1}))=B_\frac{1}{8}(0)$, and hence there exists a point $x\in B_{\frac{1}{2\sqrt{2}}}(c_1^{b_1})$ such that $f_\beta(x)=0$. This point must be a critical point of $R_\beta(z)$, then, without loss generality we can assume that $x=c_1^\beta$. Hence $B_{\frac{1}{2\sqrt{2}}}(c_1^\beta)=B_{\frac{1}{2\sqrt{2}}}(c_1^{b_1})$.

Now we will prove that $R_\beta^3(B_\frac{1}{2\sqrt{2}}(c_1^\beta))=B_\frac{1}{2\sqrt{2}}(c_1^\beta)$ for all $\beta\in D_\frac{1}{4\sqrt{2}}(b_1)$.

Let $\beta\in D_\frac{1}{4\sqrt{2}}(b_1)$ be a point.

{\bf Step 1.} We prove that $R_\beta(B_\frac{1}{2\sqrt{2}}(c_1^\beta))=R_{b_1}(B_\frac{1}{2\sqrt{2}}(c_1^{b_1}))$.

Let $x\in B_\frac{1}{2\sqrt{2}}(c_1^{b_1})$ be a point. Using 4. in Lemma \ref{LR}, if $b\in D_\frac{1}{4\sqrt{2}}(b_1)$ then 
\begin{align*}
|R_{b}(c_1^{b})-R_{b}(x)|&=|a||c_1^{b}-x||b(c_1^{b}-c_1^{b})(c_1^{b}-c_2^{b})+(x-c_1^{b})(bc_1^{b}-\frac{1}{a})|\\
          &=\sqrt{|a|}|c_1^{b}-x|^2\\
          &\leq\sqrt{2}\cdot\frac{1}{8}=\frac{1}{4\sqrt{2}}.
\end{align*} 
This implies that $R_{b}(B_\frac{1}{2\sqrt{2}}(c_1^{b}))=B_\frac{1}{4\sqrt{2}}(R_{b}(c_1^{b}))$, in particular $R_{b_1}(B_\frac{1}{2\sqrt{2}}(c_1^{b_1}))=B_\frac{1}{4\sqrt{2}}(R_{b_1}(c_1^{b_1}))$ and \\ $R_{\beta}(B_\frac{1}{2\sqrt{2}}(c_1^{\beta}))=B_\frac{1}{4\sqrt{2}}(R_{\beta}(c_1^{\beta}))$. 

Since $B_\frac{1}{2\sqrt{2}}(c_1^\beta)=B_\frac{1}{2\sqrt{2}}(c_1^{b_1})$ for all $\beta\in D_{\frac{1}{4\sqrt{2}}}(b_1)$, then $R_\beta(B_\frac{1}{2\sqrt{2}}(c_1^{b_1}))=B_\frac{1}{4\sqrt{2}}(R_\beta(c_1^{b_1}))$.

By 1. in Lemma \ref{RemarkRb} for $z=c_1^{b_1}$, we have that $$|R_{b_1}(c_1^{b_1})-R_\beta(c_1^{b_1})|=|\beta-{b_1}|<\frac{1}{4\sqrt{2}}.$$Therefore $R_{b_1}(c_1^{b_1})\in B_\frac{1}{4\sqrt{2}}(R_\beta(c_1^{b_1}))= R_\beta(B_\frac{1}{2\sqrt{2}}(c_1^{b_1}))= R_\beta(B_\frac{1}{2\sqrt{2}}(c_1^\beta))$, and hence $$R_{b_1}(B_\frac{1}{2\sqrt{2}}(c_1^{b_1}))=R_\beta(B_\frac{1}{2\sqrt{2}}(c_1^\beta)).$$

{\bf Step 2.} We prove that $R_\beta^2(B_\frac{1}{2\sqrt{2}}(c_1^\beta))=R_{b_1}^2(B_\frac{1}{2\sqrt{2}}(c_1^{b_1}))$.

By 1. in Lemma \ref{LR}, since $|R_{\beta}(c_1^{\beta})-1|<1$ for all $\beta\in D_\frac{1}{4\sqrt{2}}(b)$, then $R_\beta(B_\frac{1}{4\sqrt{2}}(R_\beta(c_1^\beta)))=B_\frac{1}{4\sqrt{2}}(R_\beta^2(c_1^\beta))$ for all $\beta\in D_\frac{1}{4\sqrt{2}}(b)$. Therefore, using 2. in Lemma \ref{RemarkRb}, with $z=R_{b_1}(c_1^{b_1})$ (since $R_{b_1}(c_1^{b_1})\in D_\frac{1}{\sqrt{2}}(1)$), we have $$|R_{b_1}(R_{b_1}(c_1^{b_1}))-R_\beta(R_{b_1}(c_1^{b_1}))|=|R_{b_1}(c_1^{b_1})-1||\beta-{b_1}|<\frac{1}{4\sqrt{2}}.$$

Therefore, $R_{b_1}(B_\frac{1}{4\sqrt{2}}(R_{b_1}(c_1^{b_1})))=R_\beta(B_\frac{1}{4\sqrt{2}}R_{b_1}((c_1^{b_1})))=R_\beta(B_\frac{1}{4\sqrt{2}}(R_\beta(c_1^\beta)))$, that is $$R^2_{b_1}(B_\frac{1}{2\sqrt{2}}(c_1^{b_1}))=R^2_\beta(B_\frac{1}{2\sqrt{2}}(c_1^\beta))=B_\frac{1}{4\sqrt{2}}(R_\beta^2(c_1^\beta)).$$

{\bf Step 3.} We prove that $R_\beta^3(B_\frac{1}{2\sqrt{2}}(c_1^\beta))=B_\frac{1}{2\sqrt{2}}(c_1^\beta)$.

Since $|R_\beta^2(c_1^\beta)|=\frac{1}{2\sqrt{2}}<\frac{1}{2}$, then by 2. in Lemma \ref{LR} $R_\beta(B_\frac{1}{4\sqrt{2}}(R_\beta^2(c_1^\beta)))=B_\frac{1}{2\sqrt{2}}(R_\beta^3(c_1^\beta))$ for all $\beta\in D_\frac{1}{4\sqrt{2}}(b_1)$. Therefore, by 3. in Lemma \ref{RemarkRb} with $z=R^2_{b_1}(c_1^{b_1})$, we have $$|R_{b_1}(R_{b_1}^2(c_1^{b_1}))-R_\beta(R_{b_1}^2(c_1^{b_1}))|=2^2|R_{b_1}^2(c_1^{b_1})|^2|\beta-b|=\frac{|\beta-b|}{2}<\frac{1}{4\sqrt{2}},$$and $$R_\beta^3(B_\frac{1}{2\sqrt{2}}(c_1^\beta))=R_\beta(R_\beta^2(B_\frac{1}{2\sqrt{2}}(c_1^\beta)))=R_\beta(R_{b_1}^2(B_\frac{1}{2\sqrt{2}}(c_1^{b_1})))=R_{b_1}(R_{b_1}^2(B_\frac{1}{2\sqrt{2}}(c_1^{b_1})))=R_{b_1}^3(B_\frac{1}{2\sqrt{2}}(c_1^{b_1})).$$Hence, $$R_\beta^3(B_\frac{1}{2\sqrt{2}}(c_1^\beta))=R_{b_1}^3(B_\frac{1}{2\sqrt{2}}(c_1^{b_1}))=B_\frac{1}{2\sqrt{2}}(c_1^{b_1})=B_\frac{1}{2\sqrt{2}}(c_1^\beta).$$
\end{proof} 

\begin{remark}
Lemma \ref{LCrit} imply that $|R_\beta(c_1^\beta)-1|=\frac{1}{2\sqrt{2}}$ and hence $|w_1^\beta-c_1^\beta|=\frac{1}{2}$, for all $\beta\in D_\frac{1}{4\sqrt{2}}(c_1^{b_1})$.
\end{remark}

The following lemma states that the preimages of $1$ under $R_\beta$, $w_1^\beta$ and $w_2^\beta$, are always close, for all $\beta\in D_\frac{1}{4\sqrt{2}}(b_1)$. This implies that $|c_i^{\beta_1}-w_j^{\beta_2}|=\frac{1}{2}$ for $i,j=1,2$ and for all $\beta_1,\beta_2\in D_\frac{1}{4\sqrt{2}}(b_1)$.

\begin{lemma}
\label{Lw1w2}
Let $\beta\in D_\frac{1}{4\sqrt{2}}(b_1)$. Then $|w_1^\beta-w_1^{b_1}|\leq\frac{1}{2\sqrt{2}}$.
\end{lemma}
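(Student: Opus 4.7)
Proof plan. The plan is to combine Lemma \ref{RemarkRb}.1 (applied at $z=w_1^{b_1}$) with the ``further'' equality in Lemma \ref{LR}.4 (applied to $R_\beta$ with $x=w_1^{b_1}$, $y=w_1^\beta$), and then conclude by a short case analysis on $|w_1^{b_1}-w_1^\beta|$.

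First I would observe that $|w_1^{b_1}|=\frac{1}{\sqrt{|a|}}=\frac{1}{\sqrt{2}}$: the remark preceding this lemma gives $|w_1^{b_1}-c_1^{b_1}|=\frac{1}{2}$, and since $|c_1^{b_1}|=\frac{1}{\sqrt{2}}>\frac{1}{2}$, strong triangle forces $|w_1^{b_1}|=\frac{1}{\sqrt{2}}$. Hence Lemma \ref{RemarkRb}.1 applies and gives
$$|R_\beta(w_1^{b_1})-R_\beta(w_1^\beta)|=|R_\beta(w_1^{b_1})-1|=|R_\beta(w_1^{b_1})-R_{b_1}(w_1^{b_1})|=|\beta-b_1|,$$
using $R_\beta(w_1^\beta)=1=R_{b_1}(w_1^{b_1})$.

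Next I would verify that both $w_1^{b_1}$ and $w_1^\beta$ lie in $D_{\frac{1}{\sqrt{|a|}}}(c_1^\beta)$. For $w_1^\beta$ this is the remark directly. For $w_1^{b_1}$, Lemma \ref{LCrit} gives $|c_1^\beta-c_1^{b_1}|\leq\frac{1}{2\sqrt{2}}$, so strong triangle yields $|w_1^{b_1}-c_1^\beta|=\frac{1}{2}<\frac{1}{\sqrt{|a|}}$. With $x=w_1^{b_1}$ and $y=w_1^\beta$ I would then evaluate the two quantities inside the max of Lemma \ref{LR}.4: from $|c_1^\beta-c_2^\beta|=\frac{1}{2\sqrt{2}}<\frac{1}{2}$ and strong triangle, $|x-c_1^\beta|=|x-c_2^\beta|=\frac{1}{2}$, so $|x-c_1^\beta||x-c_2^\beta|=\frac{1}{4}$; and from $|\beta x|=\frac{1}{\sqrt{2}}>\frac{1}{2}=|\frac{1}{a}|$, we get $|\beta x-\frac{1}{a}|=\frac{1}{\sqrt{2}}$. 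Whenever these two quantities differ, the ``further'' equality of Lemma \ref{LR}.4 yields
$$|\beta-b_1|=|a|\,|w_1^{b_1}-w_1^\beta|\cdot\max\Bigl\{\tfrac{1}{4},\,\tfrac{|w_1^{b_1}-w_1^\beta|}{\sqrt{2}}\Bigr\}.$$

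Finally I would close with a case analysis on $|w_1^{b_1}-w_1^\beta|$. If $|w_1^{b_1}-w_1^\beta|<\frac{1}{2\sqrt{2}}$, the first term in the max dominates and the equation reads $|w_1^{b_1}-w_1^\beta|=2|\beta-b_1|<\frac{1}{2\sqrt{2}}$, consistent with the case. If $|w_1^{b_1}-w_1^\beta|>\frac{1}{2\sqrt{2}}$, the second dominates, and the equation reads $|w_1^{b_1}-w_1^\beta|^2=\frac{|\beta-b_1|}{\sqrt{2}}<\frac{1}{8}$, i.e.\ $|w_1^{b_1}-w_1^\beta|<\frac{1}{2\sqrt{2}}$, contradicting the case assumption. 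The borderline case $|w_1^{b_1}-w_1^\beta|=\frac{1}{2\sqrt{2}}$ (where only the non-strict inequality in Lemma \ref{LR}.4 is available) already meets the desired bound. The main obstacle is bookkeeping strict vs.\ non-strict inequalities in the borderline case; once that is handled, the whole argument is a clean chain of strong-triangle estimates.
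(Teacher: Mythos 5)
Your proposal is correct and follows essentially the same route as the paper: both combine Lemma \ref{RemarkRb}.1 at $z=w_1^{b_1}$ with the expansion from Lemma \ref{LR}.4 applied to $R_\beta$ at the two preimages of $1$, and derive $\sqrt{2}\,|w_1^\beta-w_1^{b_1}|^2=|\beta-b_1|<\frac{1}{4\sqrt{2}}$ to rule out $|w_1^\beta-w_1^{b_1}|>\frac{1}{2\sqrt{2}}$. Your explicit handling of the borderline case $|w_1^\beta-w_1^{b_1}|=\frac{1}{2\sqrt{2}}$ is in fact slightly more careful than the paper's contradiction argument.
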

\begin{proof}
Let us prove this lemma by contradiction. Let us assume that $|w_1^\beta-w_1^{b_1}|>\frac{1}{2\sqrt{2}}$.

Note that $$0=|R_\beta(w_1^\beta)-R_{b_1}(w_1^{b_1})|=|R_\beta(w_1^\beta)-R_\beta(w_1^{b_1})+R_\beta(w_1^{b_1})-R_{b_1}(w_1^{b_1})|,$$therefore $|R_\beta(w_1^\beta)-R_\beta(w_1^{b_1})|=|R_\beta(w_1^{b_1})-R_{b_1}(w_1^{b_1})|$, and moreover, by 1. in Lemma \ref{RemarkRb}
\begin{align*}
|\beta-b_1|&=|R_\beta(w_1^{b_1})-R_{b_1}(w_1^{b_1})|=|R_\beta(w_1^\beta)-R_\beta(w_1^{b_1})|\\
     &=2|w_1^\beta-w_2^{b_1}||\beta(w_1^\beta-c_1^\beta)(w_1^\beta-c_2^\beta)+(w_1^{b_1}-w_1^\beta)(\beta w_1^\beta-\frac{1}{a})|\\
     &=\sqrt{2}|w_1^\beta-w_1^{b_1}|^2>\sqrt{2}\frac{1}{8}=\frac{1}{4\sqrt{2}}.
\end{align*}
but this is a contradiction because by hypothesis $|\beta-b_1|<\frac{1}{4\sqrt{2}}$. Hence $|w_1^\beta-w_1^{b_1}|\leq\frac{1}{2\sqrt{2}}$.
\end{proof}

Lemma \ref{LCrit} and Lemma \ref{Lw1w2} imply that $$c_1^\beta,c_2^\beta\in B_\frac{1}{2\sqrt{2}}(c_1^{b_1})\hspace{1cm}\text{and}\hspace{1cm}w_1^\beta,w_2^\beta\in B_\frac{1}{2\sqrt{2}}(w_1^{b_1}),$$ for all $\beta\in D_\frac{1}{4\sqrt{2}}(b_1)$.

Now, we study the dynamics of $R_\beta$ with $\beta\in D_\frac{1}{4\sqrt{2}}(b_1)$, in order to do this we define some notations. 

Let us define the numbers $\rho_1=\frac{1}{2}$ and $$\rho_i=\frac{1}{2\sqrt[2^2]{2}\cdots\sqrt[2^i]{2}}$$for $i\geq2$. 

The following lemma give us some properties of the sequence $\{\rho_i\}_{i\geq1}$.

\begin{lemma}
\label{Lrhozero}
\begin{enumerate}
\hfill
    \item $\rho_i=2\sqrt{2}\rho_{i+1}^2$, for all $i\geq1$.
    \item $\rho_i>\frac{1}{2\sqrt{2}}$, for all $i\geq1$.
    \item $\rho_1>\rho_2>\cdots$.
    \item $\lim_{N\to\infty}2^{2N}\rho_N^2\rho_{N-1}^2\cdots\rho_1^2=0.$
\end{enumerate}
\end{lemma}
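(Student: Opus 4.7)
The plan is to obtain a closed form for $\rho_i$ by summing the geometric series hiding in the exponent, after which each of the four claims reduces to a one-line manipulation. Writing $\rho_i = 2^{-e_i}$ with
\[
e_i = 1 + \sum_{k=2}^{i} 2^{-k} \qquad (i \ge 2), \qquad e_1 = 1,
\]
the geometric sum evaluates to $e_i = \tfrac{3}{2} - 2^{-i}$, valid for every $i \ge 1$ (the case $i=1$ is checked by hand). Thus
\[
\rho_i \;=\; 2^{-3/2 + 2^{-i}} \;=\; \frac{2^{\,2^{-i}}}{2\sqrt{2}}.
\]

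Given this closed form, item (1) follows from the identity
$-e_i + 2e_{i+1} = -(\tfrac32 - 2^{-i}) + 2(\tfrac32 - 2^{-i-1}) = \tfrac32$, so
$\rho_i/\rho_{i+1}^2 = 2^{3/2} = 2\sqrt{2}$. Item (2) follows because $2^{-i} > 0$ forces $e_i < \tfrac32$, hence $\rho_i > 2^{-3/2} = 1/(2\sqrt{2})$. Item (3) follows because $e_i$ is strictly increasing in $i$, so $\rho_i = 2^{-e_i}$ is strictly decreasing.

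For (4), summing again gives $\sum_{k=1}^{N} e_k = \tfrac{3N}{2} - (1 - 2^{-N})$, and therefore
\[
2^{2N}\,\rho_N^2\rho_{N-1}^2\cdots\rho_1^2 \;=\; 2^{\,2N - 2\sum_{k=1}^{N} e_k} \;=\; 2^{-N + 2 - 2^{1-N}},
\]
which tends to $0$ as $N \to \infty$ since the exponent goes to $-\infty$.

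There is no genuine obstacle here; the only point requiring care is the bookkeeping of the exponents when summing the geometric series. Once the closed form $\rho_i = 2^{-3/2 + 2^{-i}}$ is in hand, (1)--(4) are immediate consequences of elementary exponent arithmetic.
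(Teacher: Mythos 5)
Your proof is correct. You take a slightly different route from the paper: you sum the geometric series in the exponent exactly to get the closed form $\rho_i = 2^{-3/2+2^{-i}}$, and then all four items follow by exponent arithmetic; in particular you obtain the exact value $2^{2N}\rho_N^2\cdots\rho_1^2 = 2^{-N+2-2^{1-N}}$. The paper instead argues piecewise: (1) by multiplying out the definition directly, (2) by bounding the exponent sum $1/4+1/8+\cdots+1/2^i$ above by $1/2$ (so it uses the same geometric series, but only as an inequality), (3) as a consequence of (1) and (2), and (4) via the recursion $2^2\rho_i^2=\sqrt{2}\,\rho_{i-1}\le 1/\sqrt{2}$, which yields only the upper bound $(1/\sqrt{2})^{N-1}$ for the product --- enough for the limit. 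Your closed form unifies the four claims and gives sharper information, at the modest cost of the exponent bookkeeping you mention; the paper's version keeps each item self-contained and avoids writing down the closed form at all. Both arguments are complete and elementary, and your exponent computations (e.g.\ $-e_i+2e_{i+1}=3/2$ and $\sum_{k=1}^N e_k = \tfrac{3N}{2}-(1-2^{-N})$) check out.
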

\begin{proof}
\hfill
\begin{enumerate}
\item By definition of $\rho_{i+1}$
\begin{align*}
2\sqrt{2}\rho_{i+1}^2&=2\sqrt{2}\left(\frac{1}{2\sqrt[2^2]{2}\sqrt[2^3]{2}\cdots\sqrt[2^{i+1}]{2}}\right)^2\\
  &=2\sqrt{2}\cdot\frac{1}{4\sqrt{2}\sqrt[2^2]{2}\cdots\sqrt[2^i]{2}}\\
  &=\frac{1}{2\sqrt[2^2]{2}\cdots\sqrt[2^i]{2}}=\rho_i.
\end{align*}
And, if $i=1$ then $$2\sqrt{2}\rho_2^2=2\sqrt{2}\left(\frac{1}{2\sqrt[2^2]{2}}\right)^2=2\sqrt{2}\cdot\frac{1}{4\sqrt{2}}=\frac{1}{2}=\rho_1.$$
\item Note that, $$\rho_i=\frac{1}{2\sqrt[2^2]{2}\sqrt[2^3]{2}\cdots\sqrt[2^i]{2}}=\frac{1}{2\cdot2^{1/4}2^{1/8}\cdots2^{1/2^i}}=\frac{1}{2\cdot2^{1/4+1/8+\cdots1/2^i}}>\frac{1}{2\cdot2^\frac{1}{2}}=\frac{1}{2\sqrt{2}}.$$
\item Using 1. and 2. in this lemma we have $$\rho_i=2\sqrt{2}\rho_{i+1}^2=(2\sqrt{2}\rho_{i+1})\rho_{i+1}>\left(2\sqrt{2}\cdot\frac{1}{2\sqrt{2}}\right)\rho_{i+1}=\rho_{i+1}.$$
\item Note that $2^2\rho_i^2=2(2\rho_i^2)=2\frac{\rho_{i-1}}{\sqrt{2}}=\sqrt{2}\rho_{i-1}\leq\frac{\sqrt{2}}{2}=\frac{1}{\sqrt{2}}$ for all $i>1$, and $2^2\rho_1^2=1$. Therefore, $$2^{2N}\rho_N^2\rho_{N-1}^2\cdots\rho_1^2=(2^2\rho_N^2)(2^2\rho_{N-1}^2)\cdots(2^2\rho_1^2)\leq\frac{1}{(\sqrt{2})^{N-1}},$$hence $\lim_{N\to\infty}2^{2N}\rho_N^2\rho_{N-1}^2\cdots\rho_1^2=0$.
\end{enumerate}

\end{proof}

Since $\rho_i>\frac{1}{2\sqrt{2}}$ and by Lemma \ref{LCrit} we have that $\mathbb{S}_{\rho_i}(c_1^{b_1})=\mathbb{S}_{\rho_i}(c_1^\beta)$ for all $i\geq1$, and for all $\beta \in D_\frac{1}{4\sqrt{2}}(b_1)$. Therefore, we define the following notation.

\begin{equation}
\label{Not1}
\Psi_i=\mathbb{S}_{\rho_i}(c_1^{b_1})\hspace{1cm}\text{ and }\hspace{1cm}\Gamma_i=\mathbb{S}_\frac{1}{2^i\sqrt{2}}(0)
\end{equation}

\begin{lemma}
\label{Lrhoi}
Let $\beta\in D_\frac{1}{4\sqrt{2}}(b_1)$ be a parameter. Let $i\geq1$,
\begin{enumerate}
\item if $x\in\Psi_{i+1}$, then $R^3_\beta(x)\in\Psi_i$.
\item if $x\in\Gamma_{i+1}$ then $R_\beta(x)\in\Gamma_i$.
\end{enumerate}
\end{lemma}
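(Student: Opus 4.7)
I plan to treat the two parts separately, both via applications of Lemma \ref{LR} combined with the ultrametric triangle inequality. Part (2) is essentially immediate: for $x\in\Gamma_{i+1}$ the point satisfies $|x|=\frac{1}{2^{i+1}\sqrt{2}}<\frac{1}{|a|}$ and $|x-\frac{1}{ab}|=\frac{1}{2}$ (so $x\notin D_{1/|a|}(\frac{1}{ab})$), and the same holds trivially for $y=0$; since $R_\beta(0)=0$, part (2) of Lemma \ref{LR} yields $|R_\beta(x)|=|a||x|=\frac{1}{2^i\sqrt{2}}$, i.e.\ $R_\beta(x)\in\Gamma_i$.

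For part (1) the strategy is to track $d_k:=|R_\beta^k(x)-R_\beta^k(c_1^\beta)|$ for $k=1,2,3$. By Lemma \ref{LCrit} the periodicity $R_\beta^3(c_1^\beta)\in B_{1/(2\sqrt{2})}(c_1^\beta)=B_{1/(2\sqrt{2})}(c_1^{b_1})$ holds, and since $\rho_i>\frac{1}{2\sqrt{2}}$ by Lemma \ref{Lrhozero}(2), the ultrametric inequality will give $|R_\beta^3(x)-c_1^{b_1}|=d_3$ once I establish $d_3=\rho_i$. I envision three steps. At step~1, for $x\in\Psi_{i+1}$ one has $|x-c_1^\beta|=|x-c_2^\beta|=\rho_{i+1}$ (using $\rho_{i+1}>\frac{1}{2\sqrt{2}}=|c_1^\beta-c_2^\beta|$) and $|bx-\frac{1}{a}|=\frac{1}{\sqrt{2}}$; since $\rho_{i+1}<\frac{1}{\sqrt{2}}$ forces $\rho_{i+1}^2<\frac{\rho_{i+1}}{\sqrt{2}}$, the equality form of Lemma \ref{LR}(4) with $y=c_1^\beta$ produces $d_1=|a|\,\rho_{i+1}\cdot\frac{\rho_{i+1}}{\sqrt{2}}=\sqrt{2}\rho_{i+1}^2$. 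At step~2, since $d_1<\frac{1}{2\sqrt{2}}=|R_\beta(c_1^\beta)-1|$, both $R_\beta(x)$ and $R_\beta(c_1^\beta)$ have absolute value $1>\frac{1}{\sqrt{2}}$, so Lemma \ref{LR}(1) gives $d_2=d_1=\sqrt{2}\rho_{i+1}^2$. At step~3, both $R_\beta^2(x)$ and $R_\beta^2(c_1^\beta)$ have absolute value $\frac{1}{2\sqrt{2}}<\frac{1}{|a|}$ and sit at distance $\frac{1}{2}$ from $\frac{1}{ab}$, so Lemma \ref{LR}(2) expands by $|a|=2$ and delivers $d_3=2\sqrt{2}\rho_{i+1}^2=\rho_i$ via Lemma \ref{Lrhozero}(1).

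The main obstacle is purely bookkeeping: at each step one must verify that the relevant iterate lies in the correct region for the chosen clause of Lemma \ref{LR} to apply, and that the ultrametric maxima isolate the intended dominant term (for instance $|R_\beta^2(x)|=\frac{1}{2\sqrt{2}}$ because $d_2$ is strictly smaller than $|R_\beta^2(c_1^\beta)|$). Every such verification ultimately reduces to the strict inequality $\rho_{i+1}\leq\rho_2<\frac{1}{2}$ supplied by Lemma \ref{Lrhozero}.
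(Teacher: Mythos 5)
Your proof is correct and follows essentially the same route as the paper's: part (2) is the direct expansion estimate from Lemma \ref{LR}(2) applied at $0$, and part (1) tracks the distance to the critical point through one application each of Lemma \ref{LR}(4), (1) and (2) to get $2\sqrt{2}\rho_{i+1}^2=\rho_i$, finishing with the ultrametric inequality and the $3$-periodicity of $B_{\frac{1}{2\sqrt{2}}}(c_1^\beta)$. The only difference is that you verify the hypotheses of each clause of Lemma \ref{LR} more explicitly than the paper does, which is a welcome addition rather than a deviation.
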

\begin{proof}
\hfill
\begin{enumerate}
\item Since $x\in\Psi_{i+1}$ then $|x-c_1^{b_1}|=\rho_{i+1}<\frac{1}{2}$. By 4. in Lemma \ref{LR} we have that $$|R(x)-R(c_1^{b_1})|=\sqrt{2}|x-c_1^{b_1}|^2=\sqrt{2}\rho_{i+1}^2.$$By 1. in Lemma \ref{LR} $$|R^2(x)-R^2(c_1^{b_1})|=|R(x)-R(c_1^{b_1})|=\sqrt{2}\rho_{i+1}^2.$$By 2. in Lemma \ref{LR} we have $$|R^3(x)-R^3(c_1^{b_1})|=2|R^2(x)-R^2(c_1^{b_1})|=2\sqrt{2}\rho_{i+1}^2=\rho_i.$$Finally, since $|R^3(c_1^{b_1})-c_1^{b_1}|\leq\frac{1}{2\sqrt{2}}<\rho_i$ then $$|R^3(x)-c_1^{b_1}|=|R^3(x)-R^3(c_1^{b_1})+R^3(c_1^{b_1})-c_1^{b_1}|=\rho_i.$$Therefore, if $x\in\Psi_{i+1}$, then $R^3_\beta(x)\in\Psi_i$.
\item Recall that if $|z|<\frac{1}{2}$ then $|R_\beta(x)|=2|x|$. Therefore, if $x\in\Gamma_{i+1}$ then $|x|=\frac{1}{2^{i+1}\sqrt{2}}$ and $$|R_\beta(x)|=2|x|=\frac{2}{2^{i+1}\sqrt{2}}=\frac{1}{2^i\sqrt{2}}.$$Hence $R_\beta(x)\in\Gamma_i$.
\end{enumerate}
\end{proof}

As a consequence of Lemma \ref{LR} we have the following lemma, which give us the dynamics of $R_\beta$ over important subsets of $\mathbb{P}(\mathbb{C}_K)$.

\begin{lemma}
\label{LRb}
Let $\beta\in D_\frac{1}{4\sqrt{2}}(b)$. Then $R_\beta$ has the following dynamics.
\begin{enumerate}
\item Assume that $D_r(y)\subset\Psi_k$ with $r\leq\sqrt{2}\rho_k^2$, then $R_\beta(D_r(y))=D_{2r\rho_k^2}(R_\beta(y))$. Otherwise, if $r>\sqrt{2}\rho_k^2$, then $R_\beta(D_r(y))=D_{\sqrt{2}r^2}(R_\beta(y))$.
\item If $D_r(y)\subset D_\frac{1}{\sqrt{2}}(1)$ then $R_\beta(D_r(y))=D_r(R_\beta(y))$.
\item If $D_r(y)\subset D_\frac{1}{2}(0)$ then $R_\beta(D_r(y))=D_{2r}(R_\beta(y))$.
\end{enumerate}
\end{lemma}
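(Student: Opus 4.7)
\medskip
\noindent\textbf{Proof proposal.} The plan is to treat each of the three items by specializing Lemma \ref{LR} with $|a|=2$, checking first that the hypotheses of the appropriate part of that lemma apply to every point of $D_r(y)$. Case (3) is immediate: if $D_r(y)\subset D_{1/2}(0)$ then, by the non-archimedean property, every $w\in D_r(y)$ satisfies $|w|<1/2=1/|a|$; moreover the unique finite pole $1/(\beta a)$ has modulus $1/2$, so $D_r(y)$ avoids $D_{1/|a|}(1/(\beta a))$. Part (2) of Lemma \ref{LR} then gives $R_\beta(D_r(y))=D_{|a|r}(R_\beta(y))=D_{2r}(R_\beta(y))$. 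Case (2) follows the same pattern: if $D_r(y)\subset D_{1/\sqrt2}(1)$ then $|y|=1>1/\sqrt{|a|}$ and likewise for every point of the disk, so Lemma \ref{LR}(1) makes $R_\beta$ an isometry on $D_r(y)$, giving the image disk of radius $r$.

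The substantive case is (1), for which I would invoke the sharp form of Lemma \ref{LR}(4). For every $x$ with $|x-c_1^{b_1}|=\rho_k$, Lemma \ref{LCrit} forces $c_1^\beta\in B_{1/(2\sqrt{2})}(c_1^{b_1})$ and $|c_1^\beta-c_2^\beta|=1/(2\sqrt{2})$; since $\rho_k>1/(2\sqrt{2})$, the non-archimedean inequality promotes these bounds to the equalities
\[
|x-c_1^\beta|=|x-c_2^\beta|=\rho_k .
\]
Moreover, because $\rho_k\le\rho_1=1/2<1/\sqrt{2}=|c_1^{b_1}|$, we also get $|x|=1/\sqrt2$ and hence $|\beta x-1/a|=1/\sqrt{2}$. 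Thus for any two points $x,y\in D_r(y)\subset\Psi_k$ the formula in Lemma \ref{LR}(4) specializes to
\[
|R_\beta(x)-R_\beta(y)|=2\,|x-y|\,\max\!\left\{\rho_k^{\,2},\ \tfrac{|x-y|}{\sqrt{2}}\right\}
\]
whenever the two quantities inside the maximum differ. In the subcase $r\le\sqrt{2}\rho_k^{\,2}$ we have $|x-y|<r\le\sqrt{2}\rho_k^{\,2}$, so $|x-y|/\sqrt{2}<\rho_k^{\,2}$ strictly, the two arguments of the max are unequal, and we obtain $|R_\beta(x)-R_\beta(y)|=2\rho_k^{\,2}|x-y|$; taking the supremum over $y$ in the open disk yields image radius $2r\rho_k^{\,2}$. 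In the subcase $r>\sqrt{2}\rho_k^{\,2}$, points $x,y$ with $|x-y|$ close to $r$ satisfy $|x-y|/\sqrt{2}>\rho_k^{\,2}$, so on this dominant range $|R_\beta(x)-R_\beta(y)|=\sqrt{2}\,|x-y|^2$, and the image radius is $\sqrt{2}\,r^2$. In both subcases $R_\beta$ is pole-free on $D_r(y)$ (the pole lies at $1/(\beta a)$, modulus $1/2$, far from $\Psi_k$), so the image is genuinely a disk by the remark of Section~\ref{Sec:2}.

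The part I expect to be the main obstacle is the \emph{equality} of disks in (1), rather than merely a radius inequality: one has to confirm that the two terms in the maximum of Lemma \ref{LR}(4) really are unequal for the points that realize the relevant suprema, so that the sharp equality form — not just the $\le$ bound — applies. Once the size comparisons $r\lessgtr\sqrt{2}\rho_k^{\,2}$ have been translated into strict inequalities between $\rho_k^{\,2}$ and $|x-y|/\sqrt{2}$ on the appropriate range of $|x-y|$, everything else is bookkeeping.
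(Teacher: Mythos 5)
Your proposal is correct and follows the same route as the paper, whose entire proof of this lemma is the single sentence ``It follows directly from Lemma \ref{LR}''; you have simply supplied the case-by-case specialization (with $|a|=2$, $|x-c_i^\beta|=\rho_k$, $|\beta x-\tfrac1a|=\tfrac{1}{\sqrt2}$) that the author leaves implicit. Your attention to when the two terms in the maximum of Lemma \ref{LR}(4) are unequal, so that the sharp equality applies, is exactly the point the paper glosses over, and your handling of it is sound.
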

\begin{proof}
It follows directly from Lemma \ref{LR}.
\end{proof}

\begin{lemma}
Let $\beta\in D_\frac{1}{4\sqrt{2}}(b_1)$, $i>1$ and $D_r(x)\subset\Psi_i$ with $r<\sqrt{2}\rho_i^2$, then $$R_\beta^3(D_r(x))=D_\delta(R_\beta^3(x))\subset\Psi_{i-1}\text{ with } \delta=2^2\rho_i^2r<\sqrt{2}\rho_{i-1}^2.$$
\end{lemma}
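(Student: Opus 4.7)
The plan is to apply $R_\beta$ three times in succession to $D_r(x)$, invoking at each step the appropriate case of Lemma \ref{LRb}: part 1 near the critical point $c_1^\beta$, then part 2 near $1$, then part 3 near $0$. Multiplying the three successive radius factors $2\rho_i^2$, $1$, $2$ gives exactly $\delta = 4r\rho_i^2$. The key auxiliary facts I will use are $|R_\beta(c_1^\beta)-1| = |R_\beta^2(c_1^\beta)| = \frac{1}{2\sqrt{2}}$ (from the discussion around and after Lemma \ref{LCrit}), Lemma \ref{Lrhoi}(1) which places $R_\beta^3(x)$ on $\Psi_{i-1}$, and the identity $\rho_{i-1} = 2\sqrt{2}\rho_i^2$ from Lemma \ref{Lrhozero}(1).

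For the first iteration, the hypothesis $r < \sqrt{2}\rho_i^2$ lets me apply Lemma \ref{LRb}(1) directly to obtain $R_\beta(D_r(x)) = D_{2r\rho_i^2}(R_\beta(x))$. Before using Lemma \ref{LRb}(2) at the next step I verify the containment $D_{2r\rho_i^2}(R_\beta(x)) \subset D_{\frac{1}{\sqrt{2}}}(1)$. Taking $y = c_1^\beta$ in Lemma \ref{LR}(4), and using $|x-c_1^\beta|=|x-c_2^\beta|=\rho_i$ together with $|\beta x-\frac{1}{a}|=\frac{1}{\sqrt{2}}$ and $\rho_i < \frac{1}{\sqrt{2}}$, I obtain $|R_\beta(x)-R_\beta(c_1^\beta)| = \sqrt{2}\rho_i^2$. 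For $i>1$ one has $\rho_i<\frac{1}{2}$ (Lemma \ref{Lrhozero}(3)), so $\sqrt{2}\rho_i^2 < \frac{1}{2\sqrt{2}}$, and the ultrametric pins $|R_\beta(x)-1|=\frac{1}{2\sqrt{2}}$; the radius $2r\rho_i^2 < 2\sqrt{2}\rho_i^4$ is similarly below $\frac{1}{2\sqrt{2}}$, so the containment is clear and Lemma \ref{LRb}(2) yields $R_\beta^2(D_r(x)) = D_{2r\rho_i^2}(R_\beta^2(x))$. For the third iteration I exploit Lemma \ref{LR}(1): both $R_\beta(x)$ and $R_\beta(c_1^\beta)$ have absolute value $1$, hence lie outside $B_{\frac{1}{\sqrt{2}}}(0)$ where $R_\beta$ is an isometry; thus $|R_\beta^2(x)-R_\beta^2(c_1^\beta)| = \sqrt{2}\rho_i^2 < \frac{1}{2\sqrt{2}} = |R_\beta^2(c_1^\beta)|$, and the ultrametric gives $|R_\beta^2(x)|=\frac{1}{2\sqrt{2}}<\frac{1}{2}$. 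The radius $2r\rho_i^2$ is again small, so Lemma \ref{LRb}(3) produces $R_\beta^3(D_r(x)) = D_{4r\rho_i^2}(R_\beta^3(x))$, giving $\delta = 2^2\rho_i^2 r$.

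It remains to check $R_\beta^3(x)\in\Psi_{i-1}$ and $\delta<\sqrt{2}\rho_{i-1}^2$. The first is immediate from Lemma \ref{Lrhoi}(1). For the second, $\rho_{i-1} = 2\sqrt{2}\rho_i^2$ gives $\sqrt{2}\rho_{i-1}^2 = 8\sqrt{2}\rho_i^4$, while $\delta<4\sqrt{2}\rho_i^4$, so $\delta<\frac{1}{2}\sqrt{2}\rho_{i-1}^2$ in particular. Finally, since $\sqrt{2}\rho_{i-1}<1$, we in fact have $\delta<\rho_{i-1}$, so by the ultrametric the whole disk $D_\delta(R_\beta^3(x))$ lies on the sphere $\Psi_{i-1}$. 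The only delicate point in the argument is the twice-used ultrametric pinning at steps 2 and 3, which both reduce to the inequality $\sqrt{2}\rho_i^2<\frac{1}{2\sqrt{2}}$, i.e.\ $\rho_i<\frac{1}{2}$; this is exactly where the hypothesis $i>1$ enters, since $i=1$ would give equality $\sqrt{2}\rho_1^2=\frac{1}{2\sqrt{2}}$ and the ultrametric would no longer force the cleaner equalities.
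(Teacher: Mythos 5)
Your proof is correct and follows essentially the same route as the paper: one application each of the three cases of Lemma \ref{LRb} (near $c_1^\beta$, near $1$, near $0$), yielding the radius factors $2\rho_i^2$, $1$, $2$ and hence $\delta=2^2\rho_i^2r$, with Lemma \ref{Lrhoi} placing $R_\beta^3(x)$ on $\Psi_{i-1}$. You are in fact somewhat more careful than the paper, which asserts the intermediate containments in $D_{\frac{1}{\sqrt{2}}}(1)$ and $D_{\frac{1}{2}}(0)$ without the ultrametric pinning argument you supply.
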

\begin{proof}
Let $y\in D_r(x)$, then $|x-y|<r<\sqrt{2}\rho_i^2$. Since $D_r(x)\subset\Psi_i$ then $|x-c_1^\beta|=|x-c_2^\beta|=\rho_i$ and hence $|x-c_1^\beta||x-c_2^\beta|=\rho_i^2>\frac{r}{\sqrt{2}}>\frac{|x-y|}{\sqrt{2}}$. Therefore 
\begin{align*}
|R_\beta(x)-R_\beta(y)|&=|a||x-y||\beta(x-c_1^\beta)(x-c_2^\beta)+(y-x)(\beta x-\frac{1}{a})|\\
    &=2|x-y|\rho_i^2.
\end{align*}
This implies that $R_\beta(D_r(x))=D_{2\rho_i^2r}(R_\beta(x))$.

Since $R_\beta$ is an isomorphism in $D_1(1)$ then $R^2_\beta(D_r(x))=D_{2\rho_i^2r}(R_\beta^2(x))$. Finally, if $D_s(y)\subset D_\frac{1}{|a|}(0)$ then $R(D_s(y))=D_{2s}(R(y))$, therefore, $R^3(D_r(x))=D_\delta(R^3(x))$ with $\delta=2^2\rho_i^2r$.

Note that 
\begin{align*}
\delta&=2^2\rho_i^2r=4\frac{\rho_{i-1}}{2\sqrt{2}}r=\sqrt{2}\rho_{i-1}r\\
      &<\sqrt{2}\rho_{i-1}(\sqrt{2}\rho_i^2)=\sqrt{2}\rho_{i-1}(\frac{\rho_{i-1}}{2})\\
      &=\frac{\rho_{i-1}^2}{\sqrt{2}}<\sqrt{2}\rho_{i-1}^2
\end{align*}

This and 1. in Lemma \ref{Lrhoi} imply that $R_\beta^3(D_r(x))=D_\delta(R_\beta^3(x))\subset\Psi_{i-1}\text{ with } \delta=2^2\rho_i^2r<\sqrt{2}\rho_{i-1}^2.$
\end{proof}

\subsection{Construction of the sequence}
In this subsection we construct a sequence of integer numbers, this sequence will describe the orbit of an special point $x$ which will be in a wandering component (of some quadratic rational function). Therefore, the sequence must be chosen appropriately to avoid that the point $x$ being in a preperiodic component of the Fatou set.   

Let $n_1,n_2,\ldots$ be a sequence of integer numbers such that $n_1>1$ and $n_{k+1}\geq n_k+2$ for all $k\geq1$. By induction, for $k=1,2,\ldots$ we choose the integers $m_k$ such that $$2^{2m_k}\rho_{m_k}^2\cdots\rho_1^2<\frac{1}{2^{n_{k+1}+1}}.$$

Roughly speaking, the numbers $n_k$ represent the number of consecutive times the images of the point $x$ is close to $0$ and the numbers $m_k$ represent the number of times the images of $x$ is close to the critical points, before it gets close to $0$ again. 

Given the sequence $n_1,m_1,n_2,m_2,\ldots$, let us define the numbers

\begin{equation}
\label{I}
I_0:=0\hspace{5mm}\text{ and }\hspace{5mm} I_k=\sum_{j=1}^k(n_j+3(m_j-1)+2).
\end{equation}

\begin{definition}
We say that $x$ has itinerary $n_1m_1n_2m_2\cdots n_km_k$ under $R_\beta$ if
\begin{itemize}
\item $x\in D_\frac{1}{\sqrt{2}}(1)$,
\item $R_\beta^{I_j+1}(x)\in\Gamma_{n_{j+1}}$ for $j=0,\ldots,k-1$,
\item $R_\beta^{I_j+n_{j+1}+1}(x)\in\Psi_{m_{j+1}}$ for $j=0,\ldots,k-1$,
\item $R_\beta^{I_k}(x)\in D_\frac{1}{\sqrt{2}}(1)$.
\end{itemize}
We say that $x$ has an strict itinerary under $R_\beta$, if moreover $R_\beta^{I_k}(x)=1$. 
\end{definition}

\subsection{The point $x$ with strict itinerary $n_1m_1$ under $R_{b_1}$}

We have started with a special quadratic rational function $R_{b_1}(z)$, such that $R_{b_1}^3(B_\frac{1}{2\sqrt{2}}(c_1^{b_1}))=B_\frac{1}{2\sqrt{2}}(c_1^{b_1})$. In this subsection we find a point $x\in D_\frac{1}{\sqrt{2}}(1)$ such that it has strict itinerary $n_1m_1$ under $R_{b_1}(z)$.

\begin{lemma}
Let $R_{b_1}(z)$ be a quadratic rational function as in \eqref{Ecb1} and $n_1$ and $m_1$ integer numbers as in previous subsection. Then, there exists a point $x$ with strict itinerary $n_1m_1$ under $R_{b_1}$
\end{lemma}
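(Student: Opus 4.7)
My plan is to construct $x$ by $I_1 = n_1 + 3m_1 - 1$ backward iterations of $R_{b_1}$ starting from $\xi_0 := 1$, choosing at each step a preimage on the sphere demanded by the itinerary. Each $R_{b_1}$-preimage amounts to picking a specific root of a quadratic singled out by its Newton polygon; each $R_{b_1}^3$-preimage in the critical cycle is produced by a surjectivity statement for $R_{b_1}^3$ on balls around $c_1^{b_1}$.

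\textbf{Construction.} First I set $\xi_1 := w_1^{b_1}$; by the remark after Lemma~\ref{LCrit}, $\xi_1 \in \Psi_1$ and $R_{b_1}(\xi_1) = 1$. For $j = 2, \ldots, m_1$, I choose $\xi_j \in \Psi_j$ with $R_{b_1}^3(\xi_j) = \xi_{j-1}$ (the delicate step, below). For $k = 1, \ldots, n_1$, I define $\xi_{m_1 + k}$ as the small-norm preimage of $\xi_{m_1 + k - 1}$ under $R_{b_1}$: the preimage quadratic $z^2 - (1 + b_1 \eta) z + \eta/a = 0$ with $\eta := \xi_{m_1 + k - 1}$ (of norm $1/(2^{k-1}\sqrt{2})$) has $|1 + b_1 \eta| = 1$ and $|\eta/a| = |\eta|/2$, so its Newton polygon gives roots of norms $1$ and $1/(2^k \sqrt{2})$; the small one lies in $\Gamma_k$. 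Finally I let $x$ be the large-norm root of the preimage quadratic over $\xi_{m_1 + n_1} \in \Gamma_{n_1}$: $|x| = 1$ and $|x - 1| \leq |b_1 \xi_{m_1 + n_1}| = 1/(2^{n_1}\sqrt{2}) < 1/\sqrt{2}$ (using $n_1 \geq 1$), so $x \in D_{1/\sqrt{2}}(1)$. Reading forward, $R_{b_1}(x) = \xi_{m_1 + n_1} \in \Gamma_{n_1}$, $R_{b_1}^{n_1 + 1}(x) = \xi_{m_1} \in \Psi_{m_1}$, and $R_{b_1}^{I_1}(x) = \xi_0 = 1$, so $x$ has the required strict itinerary.

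\textbf{Main obstacle.} The heart of the argument is producing each $\xi_j$ in the critical cycle. I would show that $R_{b_1}^3\colon B_{\rho_j}(c_1^{b_1}) \to B_{\rho_{j-1}}(c_1^{b_1})$ is surjective with $R_{b_1}^3(D_{\rho_j}(c_1^{b_1})) = D_{\rho_{j-1}}(c_1^{b_1})$. This comes from chaining the three relevant regimes of Lemma~\ref{LR}: part~4 (with its ``further'' equality) gives $R_{b_1}(D_{\rho_j}(c_1^{b_1})) = D_{\sqrt{2}\rho_j^2}(R_{b_1}(c_1^{b_1}))$; part~1 acts as an isometry on $D_{1/\sqrt{2}}(1)$, which contains this image; part~2 scales by $|a| = 2$ on $D_{1/2}(0)$, which contains the next. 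The successive radii compose as $\rho_j \to \sqrt{2}\rho_j^2 \to \sqrt{2}\rho_j^2 \to 2\sqrt{2}\rho_j^2 = \rho_{j-1}$, the final equality being Lemma~\ref{Lrhozero}.1, and the re-centering at $c_1^{b_1}$ uses $|R_{b_1}^3(c_1^{b_1}) - c_1^{b_1}| \leq 1/(2\sqrt{2}) < \rho_{j-1}$. Combined with Lemma~\ref{Lrhoi}.1 this extends to closed balls; since $\xi_{j-1} \in \Psi_{j-1}$ lies off the open disk $D_{\rho_{j-1}}(c_1^{b_1})$, its preimage in $B_{\rho_j}(c_1^{b_1})$ cannot lie in $D_{\rho_j}(c_1^{b_1})$ and hence must lie on $\Psi_j$.
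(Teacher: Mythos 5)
Your construction is correct and rests on the same underlying fact as the paper's proof --- that $R_{b_1}$ (resp.\ $R_{b_1}^3$) maps the relevant spheres of the itinerary surjectively onto one another, so $1$ can be pulled back along the prescribed route, with $I_1=n_1+3(m_1-1)+2$ steps from $x$ to $1$ --- but your decomposition is genuinely different. The paper works forwards: it pushes $\mathbb{S}_{\frac{1}{2^{n_1}\sqrt{2}}}(1)$ ahead $n_1+1$ steps onto $\mathbb{S}_{\frac{1}{\sqrt{2}}}(0)$, selects $y$ hitting $c_1^{b_1}$ so that the sub-sphere $\mathbb{S}_{\rho_{m_1}/2^{n_1}}(y)$ lands on $\Psi_{m_1}$, pushes $\Psi_{m_1}$ onto $\Psi_1$ to select $p$ with $R_{b_1}^{3(m_1-1)}(p)=w_1^{b_1}$, and only then pulls $p$ back to $x$; you instead run a uniform backward induction from $\xi_0=1$, choosing one preimage at a time. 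What your route buys is explicitness at exactly the two places where the paper is terse: the surjectivity $R_{b_1}^{3(m_1-1)}(\Psi_{m_1})=\Psi_1$ is attributed there to Lemma \ref{Lrhoi}, which only gives the containment $R_{b_1}^3(\Psi_{i+1})\subseteq\Psi_i$, whereas your computation $R_{b_1}^3(D_{\rho_j}(c_1^{b_1}))=D_{\rho_{j-1}}(c_1^{b_1})$ plus the remark that a preimage of a point of $\Psi_{j-1}$ inside $B_{\rho_j}(c_1^{b_1})$ cannot fall in the open disk genuinely proves surjectivity onto the sphere; and the preimages along the $\Gamma$-spheres are pinned down by the Newton polygon of $z^2-(1+b_1\eta)z+\frac{\eta}{a}$ rather than by an unproved claim about forward images of spheres. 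The one phrase worth tightening is ``combined with Lemma \ref{Lrhoi}.1 this extends to closed balls'': to get $R_{b_1}^3\left(B_{\rho_j}(c_1^{b_1})\right)=B_{\rho_{j-1}}(c_1^{b_1})$ you should note that the image of the closed ball is again a closed ball (no poles in it, Lemma 2.1 of \cite{JRLT}) which contains $D_{\rho_{j-1}}(c_1^{b_1})$ together with at least one point at distance $\rho_{j-1}$ from $c_1^{b_1}$, hence equals $B_{\rho_{j-1}}(c_1^{b_1})$.
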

\begin{proof}
We prove this lemma in three steps.

{\bf Step 1.} Let us consider the sphere $\mathbb{S}_\frac{1}{2^{n_1}\sqrt{2}}(1)$. Note that $R_{b_1}(\mathbb{S}_\frac{1}{2^{n_1}\sqrt{2}}(1))=\Gamma_{n_1}$ and hence $$R_{b_1}^{n_1+1}(\mathbb{S}_\frac{1}{2^{n_1}\sqrt{2}}(1))=\mathbb{S}_\frac{1}{\sqrt{2}}(0).$$Therefore, there exists $y\in\mathbb{S}_\frac{1}{2^{n_1}\sqrt{2}}(1)$ such that $R_{b_1}^{n_1+1}(D_\frac{1}{2^{n_1}\sqrt{2}}(y))=D_\frac{1}{\sqrt{2}}(c_1^{b_1})$. Moreover, if $R_{b_1}^{n_1+1}(y)=c_1^{b_1}$ then $$R_{b_1}^{n_1+1}(\mathbb{S}_\frac{\rho_{m_1}}{2^{n_1}}(y))=\mathbb{S}_{\rho_{m_1}}(c_1^{b_1})=\Psi_{m_1}.$$

{\bf Step 2.} Let us consider the sphere $\Psi_{m_1}$. By Lemma \ref{Lrhoi} we have that $$R_{b_1}^{3(m_1-1)}(\Psi_{m_1})=\Psi_1=\mathbb{S}_\frac{1}{2}(c_1^{b_1}).$$Therefore, there exists a point $p\in\Psi_{m_1}$ such that $R_{b_1}^{3(m_1-1)}(p)=w_1^{b_1}$ and hence $R_{b_1}^{3(m_1-1)+1}(p)=1$.

{\bf Step 3.} Let $x\in\mathbb{S}_\frac{\rho_{m_1}}{2^{n_1}}(y)$ such that $R_{b_1}^{n_1+1}(x)=p$. Hence,
\begin{itemize}
\item Since $x\in\mathbb{S}_\frac{\rho_{m_1}}{2^{n_1}}(y)\subset\mathbb{S}_\frac{1}{2^{n_1}\sqrt{2}}(1)$ then $x\in D_\frac{1}{\sqrt{2}}(1)$.
\item Since $x\in\mathbb{S}_\frac{1}{2^{n_1}\sqrt{2}}(1)$ then $R_{b_1}^{I_0+1}(x)\in\Gamma_{n_1}$.
\item Since $R_{b_1}^{n_1+1}(x)=p$ then $R_{b_1}^{I_0+n_1+1}(x)\in\Psi_{m_1}$.
\item Since $R_{b_1}^{3(m_1-1)+1}(p)=1$ then $$R_{b_1}^{I_1}(x)=R_{b_1}^{I_0+n_1+3(m_1-1)+2}(x)=R_{b_1}^{3(m_1-1)+1}(R_{b_1}^{n_1+1}(x))=R_{b_1}^{3(m_1-1)+1}(p)=1.$$ 
\end{itemize}
Therefore, $x$ has itinerary $n_1m_1$ under $R_{b_1}$.
\end{proof}

Now, we vary the parameter $b_1$ so that the point $x$ has bigger itineraries, that is, we find a parameter $b_2\in D_\frac{1}{4\sqrt{2}}(b_1)$ such that $x$ has itinerary $n_1m_1n_2m_2$ under $R_{b_2}(z)$, then we find a parameter $b_3\in D_\frac{1}{4\sqrt{2}}(b_1)$ such that $x$ has itinerary $n_1m_1n_2m_2n_3m_3$ under $R_{b_3}(z)$, and so on. Our goal is to find a parameter $b\in D_\frac{1}{4\sqrt{2}}(b_1)$ such that the point $x$ has itinerary $n_1m_1n_2m_2\ldots$ under $R_b(z)$.

Before to find the parameters $b_2$, $b_3,\ldots$ we study, in the next subsection, the dynamics of the rational function $R_\beta(z)$ for different $\beta\in D_\frac{1}{4\sqrt{2}}(b_1)$.

\subsection{The family $\Phi_x^N$}

In this subsection we compare the points $R^n_\beta(x)$ for different $\beta\in D_\frac{1}{4\sqrt{2}}(b_1)$ and different values $n\in\mathbb{N}$. This analysis will help us to find the next parameters, $b_2$, $b_3$, etcetera. In order to compare the values $R_\beta(x)$ we give the following definition.

\begin{definition}
\label{Phi}
Given $N\in\mathbb{N}$, let us define the rational function 
\begin{align*}
\Phi_x^N:\mathbb{P}(\mathbb{C}_K)&\to\mathbb{P}(\mathbb{C}_K)\\
              \beta&\mapsto R_\beta^N(x)
\end{align*}
\end{definition}

Note that if $x$ has strict itinerary $n_1m_1\cdots n_km_k$ under $R_\beta$ then $$\Phi_x^{I_k}(\beta)=R_\beta^{I_k}(x)=1.$$

\begin{remark}
\label{RPhi1}
Let $x\in D_\frac{1}{\sqrt{2}}(1)$ and $\beta\in D_\tau(b_1)$ with $\tau\leq\frac{1}{4\sqrt{2}}$. Then, by 2. in Lemma \ref{RemarkRb} $$|\Phi_x^1(\beta)-\Phi_x^1(b_1)|=|R_\beta(x)-R_{b_1}(x)|=|x-1||\beta-b|,$$and hence $\Phi_x^1(D_\tau(b_1))=D_{|x-1|\tau}(R_1(x))$. 
\end{remark}

Moreover, we have the following lemma.

\begin{lemma}
\label{LPhi}
Let $\Phi_x^N$ be the rational function given in Definition \ref{Phi}. 
\begin{enumerate}
\item Assume that $\Phi_x^N(D_\tau(b))=D_\delta(R_b^N(x))\subset\Psi_k$ and let $\mu_1=\max\{\tau,2\delta\rho_k^2,\sqrt{2}\delta^2\}$. Then $$\Phi_x^{N+1}(D_\tau(b))=D_{\mu_1}(R_b^{N+1}(x)).$$
\item Assume that $\Phi_x^N(D_\tau(b))=D_\delta(R_b^N(x))\subset D_\frac{1}{\sqrt{2}}(1)$ and let $\mu_2=\max\{|R_b^N(x)-1|\tau,\delta\}$. Then $$\Phi_x^{N+1}(D_\tau(b))=D_{\mu_2}(R_b^{N+1}(x)).$$
\item Assume that $\Phi_x^N(D_\tau(b))=D_\delta(R_b^N(x))\subset D_\frac{1}{2}(0)$ and let $\mu_3=\max\{4|R_b^N(x)|^2\tau,2\delta\}$. Then $$\Phi_x^{N+1}(D_\tau(b))=D_{\mu_3}(R_b^{N+1}(x)).$$
\end{enumerate}
\end{lemma}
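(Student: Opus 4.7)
The natural strategy is to exploit the identity $\Phi_x^{N+1}(\beta)=R_\beta(\Phi_x^N(\beta))$ and split the increment coming from perturbing $\beta$ into a parameter-change term and a dynamics term. Concretely, for $\beta\in D_\tau(b)$ write
\begin{align*}
\Phi_x^{N+1}(\beta)-\Phi_x^{N+1}(b) &= R_\beta\bigl(\Phi_x^N(\beta)\bigr)-R_b\bigl(\Phi_x^N(b)\bigr) \\
&= \underbrace{\bigl[R_\beta(\Phi_x^N(\beta))-R_b(\Phi_x^N(\beta))\bigr]}_{(\mathrm{A})}+\underbrace{\bigl[R_b(\Phi_x^N(\beta))-R_b(\Phi_x^N(b))\bigr]}_{(\mathrm{B})}.
\end{align*}
Term $(\mathrm{A})$ compares two different maps $R_\beta,R_b$ evaluated at the same point and is the business of Lemma \ref{RemarkRb}. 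Term $(\mathrm{B})$ compares the single map $R_b$ on two points of the disk $\Phi_x^N(D_\tau(b))=D_\delta(R_b^N(x))$, which is the business of Lemma \ref{LRb}.

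For each of the three assumptions, I check that Lemma \ref{RemarkRb} applies to $(\mathrm{A})$ with $z=\Phi_x^N(\beta)$: in case (1) the inclusion $D_\delta(R_b^N(x))\subset\Psi_k$ together with $\rho_k<1/\sqrt{2}$ forces $|\Phi_x^N(\beta)|=1/\sqrt{2}$, giving $|(\mathrm{A})|=|\beta-b|<\tau$; in case (2), $\Phi_x^N(\beta)\in D_{1/\sqrt{2}}(1)$ gives $|(\mathrm{A})|=|\Phi_x^N(\beta)-1|\,|\beta-b|\leq|R_b^N(x)-1|\tau$ (using ultrametric to replace $|\Phi_x^N(\beta)-1|$ by $|R_b^N(x)-1|$ when $\delta<|R_b^N(x)-1|$, and absorbing the other subcase into $\mu_2$ via the $\delta$ term coming from $(\mathrm{B})$); in case (3) one gets $|(\mathrm{A})|=4|\Phi_x^N(\beta)|^2|\beta-b|\leq 4|R_b^N(x)|^2\tau$ by the same ultrametric argument. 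Simultaneously, Lemma \ref{LRb} applied to $R_b$ on $D_\delta(R_b^N(x))$ bounds $|(\mathrm{B})|$ by $\max\{2\delta\rho_k^2,\sqrt{2}\delta^2\}$ in case (1), by $\delta$ in case (2), and by $2\delta$ in case (3).

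Combining the two bounds with the ultrametric triangle inequality yields $|\Phi_x^{N+1}(\beta)-\Phi_x^{N+1}(b)|<\mu_i$, i.e.\ $\Phi_x^{N+1}(D_\tau(b))\subseteq D_{\mu_i}(R_b^{N+1}(x))$. For the reverse inclusion I would invoke the preliminary fact (Lemma 2.1 in \cite{JRLT}) that any rational map sends an open disk on which it has no pole to another open disk; since $\Phi_x^{N+1}$ is rational in $\beta$ and the image is already shown to be bounded, $\Phi_x^{N+1}(D_\tau(b))=D_{\mu'}(R_b^{N+1}(x))$ for some $\mu'\leq\mu_i$. To upgrade to $\mu'=\mu_i$ I argue that the two contributions $(\mathrm{A})$ and $(\mathrm{B})$ sweep their full ranges independently: $(\mathrm{A})$ depends on $\beta-b$ whose modulus runs through all values strictly less than $\tau$, while $(\mathrm{B})$ depends on $\Phi_x^N(\beta)$ which, by the induction hypothesis, runs through the full open disk $D_\delta(R_b^N(x))$, so at least one of the two terms realises the dominant scale.

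The main obstacle is exactly this last equality step: in the borderline regime where the two contributions in the max are comparable, one has to verify that no systematic cancellation can shrink the image below the maximum. A clean way to handle it is to treat separately the two subcases ``$(\mathrm{A})$-dominant'' and ``$(\mathrm{B})$-dominant'' — in each, choose $\beta$ so that the dominating term attains values arbitrarily close to its upper bound while the other is strictly smaller, so the ultrametric forces $|\Phi_x^{N+1}(\beta)-\Phi_x^{N+1}(b)|$ to approach $\mu_i$; this rules out $\mu'<\mu_i$ since the image is an open disk.
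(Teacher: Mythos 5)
Your proposal follows essentially the same route as the paper: the paper uses the identical telescoping decomposition $R_b(R_b^N(x))-R_\beta(R_b^N(x))+R_\beta(R_b^N(x))-R_\beta(R_\beta^N(x))$ (yours just swaps the roles of $b$ and $\beta$ in the cross term), bounds the first piece with Lemma \ref{RemarkRb} and the second with the dynamics of a single $R_\beta$ from Lemma \ref{LR}, and concludes that the diameter equals $\mu_i$ because $\tau$ and $\delta$ are the suprema of the two contributions. Your extra discussion of the borderline cancellation issue in the equality step is a reasonable elaboration of a point the paper leaves implicit, but it does not change the argument.
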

\begin{proof}
In order to prove this lemma we use Lemma \ref{RemarkRb} and the fact
\begin{align*}
|\Phi_x^{N+1}(b)-\Phi_x^{N+1}(\beta)|&=|R_b^{N+1}(x)-R_\beta^{N+1}(x)|\\
  &=|R_b(R_b^N(x))-R_\beta(R_\beta^N(x))|\\
  &=|R_b(R_b^N(x))-R_\beta(R_b^N(x))+R_\beta(R_b^N(x))-R_\beta(R_\beta^N(x))|
\end{align*}
\begin{enumerate}
\item Since $\Phi_x^N(D_\tau(b))=D_\delta(R_b^N(x))\subset\Psi_k$ then $$\delta=\sup_{\beta\in D_\tau(b)}\{|\Phi_x^N(b)-\Phi_x^N(\beta)|\}=\sup_{\beta\in D_\tau(b)}\{|R_b^N(x)-R_\beta^N(x)|\}.$$Note that, by 1. in Lemma \ref{RemarkRb}, $|R_b(R_b^N(x))-R_\beta(R_b^N(x))|=|b-\beta|$ and 
\begin{align*}
|R_\beta(R_b^N(x))-&R_\beta(R_\beta^N(x))|\\
&=2|R_b^N(x)-R_\beta^N(x)|\left|\beta(R_b^N(x)-c_1^\beta)(R_b^N(x)-c_2^\beta)-(R_b^N(x)-R_\beta^N(x))\left(\beta R_b^N(x)-\frac{1}{a}\right)\right|
\end{align*}
where $$|\beta(R_b^N(x)-c_1^\beta)(R_b^N(x)-c_2^\beta)|=\rho_k^2$$ and $$|(R_b^N(x)-R_\beta^N(x))(\beta R_b^N(x)-\frac{1}{a})|=\frac{|(R_b^N(x)-R_\beta^N(x))|}{\sqrt{2}}.$$

Therefore $$|\Phi_x^{N+1}(b)-\Phi_x^{N+1}(\beta)|\leq\max\{|b-\beta|,2|R_b^N(x)-R_\beta^N(x)|\rho_k^2,\sqrt{2}|R_b^N(x)-R_\beta^N(x)|^2\}.$$Since $\tau=\sup_{\beta\in D_\tau(b)}\{|b-\beta|\}$ and $\delta=\sup_{\beta\in D_\tau(b)}\{|R_b^N(x)-R_\beta^N(x)|\}$, then $\diam(\Phi_x^{N+1}(D_\tau(b)))=\mu_1$ with $\mu_1=\max\{\tau,2\delta\rho_k^2,\sqrt{2}\delta^2\}$.

\item By 2. in Lemma \ref{RemarkRb} and since $\Phi_x^N(D_\tau(b))=D_\delta(R_b^N(x))\subset D_\frac{1}{\sqrt{2}}(1)$ then $$|R_b(R_b^N(x))-R_\beta(R_b^N(x))|=|R_b^N(x)-1||b-\beta|$$ and $|R_\beta(R_b^N(x))-R_\beta(R_\beta^N(x))|=|R_b^N(x)-R_\beta^N(x)|$. Therefore 
\begin{align*}
|\Phi_x^{N+1}(b)-\Phi_x^{N+1}(\beta)|&=|R_b(R_b^N(x))-R_\beta(R_b^N(x))+R_\beta(R_b^N(x))-R_\beta(R_\beta^N(x))|\\
    &\leq\max\{|R_b^N(x)-1||b-\beta|,|R_b^N(x)-R_\beta^N(x)|\}.
\end{align*}
Since $\tau=\sup_{\beta\in D_\tau(b)}\{|b-\beta|\}$ and $\delta=\sup_{\beta\in D_\tau(b)}\{|R_b^N(x)-R_\beta^N(x)|\}$, then $\diam(\Phi_x^{N+1}(D_\tau(b)))=\mu_2$ with $\mu_2=\max\{|R_b^N(x)-1|\tau,\delta\}$.

\item By 3. in Lemma \ref{RemarkRb} and since $\Phi_x^N(D_\tau(b))=D_\delta(R_b^N(x))\subset D_\frac{1}{2}(0)$ then $$|R_b(R_b^N(x))-R_\beta(R_b^N(x))|=4|R_b^N(x)|^2|b-\beta|$$ and $|R_\beta(R_b^N(x))-R_\beta(R_\beta^N(x))|=2|R_b^N(x)-R_\beta^N(x)|$. Therefore 
\begin{align*}
|\Phi_x^{N+1}(b)-\Phi_x^{N+1}(\beta)|&=|R_b(R_b^N(x))-R_\beta(R_b^N(x))+R_\beta(R_b^N(x))-R_\beta(R_\beta^N(x))|\\
    &\leq\max\{4|R_b^N(x)|^2|b-\beta|,2|R_b^N(x)-R_\beta^N(x)|\}.
\end{align*}
Since $\tau=\sup_{\beta\in D_\tau(b)}\{|b-\beta|\}$ and $\delta=\sup_{\beta\in D_\tau(b)}\{|R_b^N(x)-R_\beta^N(x)|\}$, then $\diam(\Phi_x^{N+1}(D_\tau(b)))=\mu_3$ with $\mu_3=\max\{4|R_b^N(x)|^2\tau,2\delta\}$.
\end{enumerate}
\end{proof}

Now, we define some values ($\tau_k$ for $k\geq1$) which will help us to find the parameters $b_k$. In fact, in the next subsection, we will prove that the parameter $b_{k+1}\in D_{\tau_k}(b_k)$ for all $k\geq1$, that is, $b_2\in D_{\tau_1}(b_1)$, $b_3\in D_{\tau_2}(b_2)$, and so on.

For $k\geq1$, we choose numbers $\tau_k>0$ such that $$\tau_1>\tau_2>\cdots\hspace{5mm}\text{and}\hspace{5mm}\frac{1}{2^{n_{k+1}}\sqrt{2}}<\tau_k<\frac{\sqrt{2}\rho_{m_k}^2}{2^{n_k}}\text{ for all }k\geq1.$$Note that $$\tau_k<\frac{\sqrt{2}\rho_{m_k}^2}{2^{n_k}}=\sqrt{2}\rho_{m_k}\frac{\rho_{m_k}}{2^{n_k}}<\frac{\rho_{m_k}}{2^{n_k}}$$for all $k\geq1$.

The following proposition give us some control over the diameter of the images $\Phi_x^{I_d}(D_{\tau}(b))$.
 
\begin{proposition}
\label{PTau}
Let $b\in D_\frac{1}{4\sqrt{2}}(b_1)$ and let $x\in D_\frac{1}{\sqrt{2}}(1)$ be a point with itinerary $n_1m_1\cdots n_km_k$ under $R_b$. Then $\Phi_x^{I_d}(D_{\tau}(b))=D_{\tau}(R_{b}^{I_d}(x))$ for all $d=1,\ldots,k$ and for all $\tau\leq\tau_k$.
\end{proposition}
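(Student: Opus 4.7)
I would argue by induction on $d$, starting either from $\Phi_x^0(D_\tau(b)) = \{x\}$ (when $d=1$, diameter $0$) or from $\Phi_x^{I_{d-1}}(D_\tau(b)) = D_\tau(R_b^{I_{d-1}}(x))$ (the inductive hypothesis for $d \geq 2$, diameter $\tau$). Between steps $I_{d-1}$ and $I_d$ I track, one integer step at a time, the diameter $\delta_N$ of $\Phi_x^N(D_\tau(b))$, applying the appropriate case of Lemma \ref{LPhi} according to where the orbit $R_b^N(x)$ sits, as dictated by the itinerary. Since $\tau \leq \tau_k \leq \tau_d < \sqrt{2}\rho_{m_d}^2/2^{n_d}$, the parameter $\tau$ is small enough to make every ``max'' in Lemma \ref{LPhi} evaluate to the desired branch.

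\textbf{The three phases.} The $I_d - I_{d-1} = n_d + 3(m_d-1) + 2$ iterations split naturally. Phase 1 is the single step $I_{d-1} \to I_{d-1}+1$ from $D_{1/\sqrt{2}}(1)$ into $\Gamma_{n_d}$; using case 2 of Lemma \ref{LPhi} with $|R_b^{I_{d-1}}(x)-1| = 1/(2^{n_d}\sqrt{2})$ (the isometry on $D_{1/\sqrt{2}}(1)$ pushes this equality out of the itinerary) keeps $\delta_{I_{d-1}+1} = \tau$ in the inductive step, and sets it to $\tau/(2^{n_d}\sqrt{2})$ in the base case. Phase 2 is $n_d$ consecutive uses of case 3 while the orbit traverses $\Gamma_{n_d},\ldots,\Gamma_1$ and enters $\Psi_{m_d}$; the $2\delta_N$ term always dominates $4|R_b^N(x)|^2 \tau$, so each step doubles the diameter, and we arrive at $\Psi_{m_d}$ with $\delta^{(m_d)} := \delta_{I_{d-1}+n_d+1}$ equal to $2^{n_d}\tau$ (inductive) or $\tau/\sqrt{2}$ (base). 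Phase 3 is the substantive part: $m_d - 1$ triples crossing $\Psi_{m_d} \to \Psi_{m_d-1} \to \cdots \to \Psi_1$, followed by one last step back into $D_{1/\sqrt{2}}(1)$, with each triple combining case 1, then case 2, then case 3 of Lemma \ref{LPhi}.

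\textbf{Phase 3 recurrence and conclusion.} Assuming inductively $\delta^{(k)} < \sqrt{2}\rho_k^2$ (so the $\sqrt{2}\delta^2$ term in case 1 is dominated by the middle $2\delta\rho_k^2$), a short chase shows that one $R_b^3$-triple produces the recurrence $\delta^{(k-1)} = \max\{2\tau, \sqrt{2}\rho_{k-1}\delta^{(k)}\}$, where I rewrite $4\rho_k^2 = \sqrt{2}\rho_{k-1}$ via item 1 of Lemma \ref{Lrhozero}. Iterating gives $\delta^{(k)} = \max\{2\tau, C_k\}$ with $C_k = (\sqrt{2})^{m_d - k}\rho_{m_d - 1}\cdots\rho_k \,\delta^{(m_d)}$; using $\rho_j = 2^{1/2^j - 3/2}$ (the explicit form behind Lemma \ref{Lrhozero}.2) together with the defining inequality $2^{2m_d}\rho_{m_d}^2\cdots\rho_1^2 < 1/2^{n_{d+1}+1}$ turns $C_1$ into a quantity $\leq 2\tau$, so $\delta^{(1)} = 2\tau$. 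A final application of case 1 at $\Psi_1$ (where $\rho_1^2 = 1/4$) then gives $\delta_{I_d} = \max\{\tau, \tau, 4\sqrt{2}\tau^2\} = \tau$, since $\tau \leq \tau_k < 1/(4\sqrt{2})$, closing the induction.

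\textbf{Main obstacle.} The hard part is phase-3 bookkeeping: each of the $3(m_d-1)+1$ sub-steps calls Lemma \ref{LPhi}, and every one of its ``max'' operations must select the branch I expect, while the invariant $\delta^{(k)} < \sqrt{2}\rho_k^2$ must propagate all the way down to $k=1$. Both requirements are precisely what the thresholds $\tau_k < \sqrt{2}\rho_{m_k}^2/2^{n_k}$ and the exponential lower bound on $m_k$ were engineered to enforce, so the work is conceptually straightforward; nevertheless, verifying each of the several interlocking inequalities (in particular, that $C_1 \leq 2\tau$ and that Case~A of the recurrence never exceeds $\sqrt{2}\rho_{k-1}^2$) is where the bulk of the computational effort hides.
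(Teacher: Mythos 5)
Your proposal is correct and follows essentially the same route as the paper: induction on $d$, a three-phase step-by-step application of Lemma \ref{LPhi} (with the base case entering $\Psi_{m_1}$ at diameter $\tau/\sqrt{2}$ and the inductive case at $2^{n_{d+1}}\tau$), and the observation that in the $\Psi$-phase the recurrence $\delta\mapsto\max\{2\tau,4\rho_k^2\delta\}$ decays until the $\tau$-term locks in. The only difference is presentational: where you carry a running maximum $\max\{2\tau,C_k\}$ and bound $C_1$ via the defining inequality for $m_d$, the paper isolates an explicit crossover index $J$ with $2\delta_{J+1}\rho_{m_{d+1}-J-1}^2<\tau<2\delta_J\rho_{m_{d+1}-J}^2$ — the same estimate in different clothing.
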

\begin{proof}
We will prove this proposition by induction over $d$. 

{\bf Step 1.} Let assume that $d=1$, that is, $x$ has itinerary $n_1m_1$. Let $\tau\leq\tau_k$. Since $x$ has itinerary $n_1m_1$ under $R_b$ then $R_b(x)\in\Gamma_{n_1}$, this implies that $|x-1|=|R_b(x)|=\frac{1}{2^{n_1}\sqrt{2}}$. By Remark \ref{RPhi1} we have that $$\Phi_x^1(D_{\tau}(b))=D_{\frac{\tau}{2^{n_1}\sqrt{2}}}(R_b(x))\subset\Gamma_{n_1}.$$

Note that for $j\in\{1,2,\ldots,n_1\}$ $$4|R_b^j(x)|^2=4\left(\frac{1}{2^{n_1-j+1}\sqrt{2}}\right)^2=\left(\frac{4}{2^{n_1-j+1}\sqrt{2}}\right)\left(\frac{1}{2^{n_1-j+1}\sqrt{2}}\right)<2\left(\frac{1}{2^{n_1-j+1}\sqrt{2}}\right),$$
then by 3. in Lemma \ref{LPhi} we have that for $j\in\{1,2,\ldots,n_1\}$ $$\Phi_x^{j+1}(D_{\tau}(b))=D_{\frac{\tau}{2^{n_1-j}\sqrt{2}}}(R_b^{j+1}(x)).$$In particular, $\Phi_x^{n_1+1}(D_{\tau}(b))=D_{\frac{\tau}{\sqrt{2}}}(R_b^{n_1+1}(x))$.

By hypothesis $\tau\leq\tau_k<\tau_1<\frac{\rho_{m_1}}{2^{n_1}}$ and using the fact $\frac{1}{2\sqrt{2}}<\rho_i\leq\frac{1}{2}$ for all $i\geq1$, we have that $$\tau\leq\tau_1<\frac{\rho_{m_1}}{2^{n_1}}\leq\frac{1}{2^{n_1+1}}<\frac{1}{2}<\sqrt{2}\rho_{m_1}.$$This implies that $\frac{\tau}{\sqrt{2}}<\rho_{m_1}$ and hence $D_{\frac{\tau}{\sqrt{2}}}(R_b^{n_1+1}(x))\subset\Psi_{m_1}$.

By 1. in Lemma \ref{LPhi} with $\delta=\frac{\tau}{\sqrt{2}}$ and $N=n_1+1$ we have that $\tau=\max\{\tau,2\delta\rho_{m_1}^2,\sqrt{2}\delta^2\}$ and hence $$\Phi_x^{n_1+2}(D_{\tau}(b))=D_{\tau}(R_b^{n_1+2}(x))\subset\mathbb{S}_\frac{1}{2\sqrt{2}}(1)\subset D_\frac{1}{\sqrt{2}}(1).$$

By 2. in Lemma \ref{LPhi} with $\delta=\tau$ and $N=n_1+2$ we have that $\tau=\max\{|R_b^N(x)-1|\tau,\tau\}$ and hence$$\Phi_x^{n_1+3}(D_{\tau}(b))=D_{\tau}(R_b^{n_1+3}(x))\subset\mathbb{S}_\frac{1}{2\sqrt{2}}(0)\subset D_\frac{1}{2}(0).$$

By 3. in Lemma \ref{LPhi} with $\delta=\tau$ and $N=n_1+3$ we have that $2\tau=\max\{4|R_b^N(x)|^2\tau,2\tau\}$ and hence $$\Phi_x^{n_1+4}(D_{\tau}(b))=D_{2\tau}(R_b^{n_1+4}(x))\subset\Psi_{m_1-1}.$$

By 1. in Lemma \ref{LPhi} with $\delta=2\tau$ and $N=n_1+4$ we have that $\tau=\max\{\tau,2\delta\rho_{m_1}^2,\sqrt{2}\delta^2\}$ and hence$$\Phi_x^{n_1+5}(D_{\tau}(b))=D_{\tau}(R_b^{n_1+5}(x))\subset D_\frac{1}{\sqrt{2}}(1),$$and so on.

Following the previous analysis we conclude that $$\Phi_x^{I_0+n_1+3(m_1-1)+2}(D_{\tau}(b))=\Phi_x^{I_1}(D_{\tau}(b))=D_{\tau}(R_b^{I_1}(x)).$$

{\bf Step 2.} Now, let us assume that the proposition is true for $1\leq d<k$. We will prove that the proposition is true for $d+1$.

By hypothesis $x$ is a point with itinerary $n_1m_1\cdots n_km_k$ and $\Phi_x^{I_{d_1}}(D_{\tau}(b))=D_{\tau}(R_b^{I_{d_1}}(x))$ for all $d_1=1,\ldots,d$ and for all $\tau\leq\tau_k$. 

Now, let us consider the itinerary $n_1m_1\cdots n_{d+1}m_{d+1}$, then by hypothesis, $$\Phi_x^{I_d}(D_{\tau}(b))=D_{\tau}(R_b^{I_d}(x)).$$   

By 2. in Lemma \ref{LPhi} with $\delta=\tau$ and $N=I_d$ we have that $$\Phi_x^{I_d+1}(D_{\tau}(b))=D_{\tau}(R_b^{I_d+1}(x))\subset\Gamma_{n_{d+1}}.$$

Let $j\in\{1,2,\ldots,n_{d+1}\}$ and $\delta=2^{j-1}\tau$. By 3. in Lemma \ref{LPhi}, if $$\Phi_x^{I_d+j}(D_{\tau}(b))=D_\delta(R_b^{I_d+j}(x))\subset\Gamma_{n_{d+1}-j+1},$$then $$2\delta=\max\{4|R_b^{I_d+j}(x)|^2\tau,2\delta\}$$and hence$$\Phi_x^{I_d+j+1}(D_{\tau}(b))=D_{2^j\tau}(R_b^{I_d+j+1}(x))\subset\Gamma_{n_{d+1}-j}.$$By induction we have that $$\Phi_x^{I_d+n_{d+1}+1}(D_{\tau}(b))=D_{2^{n_{d+1}}\tau}(R_b^{I_d+n_{d+1}+1}(x))\subset\Gamma_0.$$

Since $x$ has itinerary $n_1m_1\cdots n_{d+1}m_{d+1}$ under $R_b$, then $R_b^{I_d+n_{d+1}+1}(x)\in\Psi_{m_{d+1}}$. Moreover, since $\tau\leq\tau_k\leq\tau_{d+1}<\frac{\rho_{m_{d+1}}}{2^{n_{d+1}}}$ then $2^{n_{d+1}}\tau<\rho_{m_{d+1}}$, and therefore $$\Phi_x^{I_d+n_{d+1}+1}(D_{\tau}(b))=D_{2^{n_{d+1}}\tau}(R_b^{I_d+n_{d+1}+1}(x))\subset\Psi_{m_{d+1}}.$$

Note that, for $\delta_0=2^{n_{d+1}}\tau$ and $\delta_j=2^{n_{d+1}}2^{2j}\rho_{m_{d+1}}^2\cdots\rho_{m_{d+1}-j+1}^2\tau$ for $1\leq j< m_{d+1}$ we have the following:
\begin{itemize}
\item since $\tau\leq\tau_{d+1}<\frac{\sqrt{2}\rho_{m_{d+1}}^2}{2^{n_{d+1}}}$ and $2^2\rho_i^2\leq1$ for all $i\geq1$, then $\delta_j\leq2^{n_{d+1}}\tau$ and hence $$\sqrt{2}\delta_j^2=(\sqrt{2}\delta_j)(\delta_j)\leq(\sqrt{2}\delta_j)2^{n_{d+1}}\tau<(\sqrt{2}\delta_j)(\sqrt{2}\rho_{m_{d+1}}^2)\leq2\delta_j\rho_{m_{d+1}-j}^2,$$for all $j=0,1,\ldots,m_{d+1}$.
\item $2\delta_0\rho_{m_{d+1}}^2=2^{n_{d+1}+1}\rho_{m_{d+1}}^2\tau>2^{n_d+3}\cdot\frac{1}{8}\tau=2^{n_d}\tau>\tau$.
\item \begin{align*}
2\delta_{m_{d+1}-1}\rho_1^2&=2(2^{n_{d+1}}2^{2(m_{d+1}-1)}\rho_{m_{d+1}}^2\cdots\rho_2^2\tau)\rho_1^2\\
 &=\frac{1}{2}\cdot2^{n_{d+1}}2^{2(m_{d+1})}\rho_{m_{d+1}}^2\cdots\rho_2^2\rho_1^2\tau<\frac{1}{2}\cdot\frac{2^{n_{d+1}}}{2^{n_{d+1}+1}}\tau\\
 &=\frac{\tau}{4}<\tau
\end{align*}
\item $\delta_{j+1}<\delta_j$ for all $j\geq0$.
\end{itemize}
Therefore, there exists $J\in\{0,\ldots,m_{d+1}-2\}$ such that $$2\delta_{J+1}\rho_{m_{d+1}-J-1}^2<\tau<2\delta_J\rho_{m_{d+1}-J}^2$$

Recall that $$\Phi_x^{I_d+n_{d+1}+1}(D_{\tau}(b))=D_{\delta_0}(R_b^{I_d+n_{d+1}+1}(x))\subset\Psi_{m_{d+1}},$$therefore, by the previous remark we have that $$\Phi_x^{I_d+n_{d+1}+3j+1}(D_{\tau}(b))=D_{\delta_j}(R_b^{I_d+n_{d+1}+3j+1}(x))\subset\Psi_{m_{d+1}-j},$$for all $0\leq j\leq J+1$. In particular $$\Phi_x^{I_d+n_{d+1}+3(J+1)+1}(D_{\tau}(b))=D_{\delta_{J+1}}(R_b^{I_d+n_{d+1}+3(J+1)+1}(x))\subset\Psi_{m_{d+1}-(J+1)}$$

Therefore, since $2\delta_{J+1}\rho_{m_{d+1}-J-1}^2<\tau$, then $$\Phi_x^{I_d+n_{d+1}+3(J+1)+2}(D_{\tau}(b))=D_\tau(R_b^{I_d+n_{d+1}+3(J+1)+2}(x))\subset D_\frac{1}{\sqrt{2}}(1).$$

Following the analysis given by Lemma \ref{LPhi} we have that $$\Phi_x^{I_d+n_{d+1}+3j+2}(D_{\tau}(b))=D_\tau(R_b^{I_d+n_{d+1}+3j+2}(x)),$$for all $j\geq J+1$. In particular, for $j=m_{d+1}-1$ we have that $I_{d+1}=I_d+n_{d+1}+3(m_{d+1}-1)+2$ and $$\Phi_x^{I_{d+1}}(D_{\tau}(b))=D_\tau(R_b^{I_{d+1}}(x)).$$
\end{proof}

\subsection{Finding the parameter $b_{k+1}$}

In this subsection we give an algorithm to find the parameter $b_{k+1}$ from the parameter $b_k$, this algorithm is given in the proof of following lemma.

\begin{lemma}
Let $x\in D_\frac{1}{\sqrt{2}}(1)$ with strict itinerary $n_1m_1n_2m_2\ldots n_km_k$ under $R_{b_k}(z)$. Then, there exists a parameter $b_{k+1}\in D_{\tau_k}(b_k)$ such that $x$ has itinerary $n_1m_1n_2m_2\ldots n_km_kn_{k+1}m_{k+1}$ under $R_{b_{k+1}}(z)$.
\end{lemma}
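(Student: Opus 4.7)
The plan is to imitate, in parameter space, the construction that produced the base point $x$ with strict itinerary $n_1 m_1$ under $R_{b_1}$, but using the rational map $\Phi_x^N(\beta) = R_\beta^N(x)$ in place of the $R_\beta$-dynamics. The strictness hypothesis $R_{b_k}^{I_k}(x) = 1$, together with Proposition \ref{PTau}, provides the initial input: for every $\tau \leq \tau_k$ one has $\Phi_x^{I_k}(D_\tau(b_k)) = D_\tau(1)$, and the step-by-step analysis in the proof of Proposition \ref{PTau} shows that every $\beta \in D_{\tau_k}(b_k)$ automatically realizes the first $k$ blocks of the itinerary. Thus the task reduces to selecting $b_{k+1} \in D_{\tau_k}(b_k)$ so that the new block $n_{k+1} m_{k+1}$ is also realized.

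For the transit through $\Gamma_{n_{k+1}}$, one more application of $R_\beta$ yields, via Lemma \ref{LPhi}(2) with $|R_{b_k}^{I_k}(x) - 1| = 0$, the equality $\Phi_x^{I_k+1}(D_{\tau_k}(b_k)) = D_{\tau_k}(0)$, and moreover $\Phi_x^{I_k+1}$ acts as an isometry on this disk. Since $\tau_k > 1/(2^{n_{k+1}}\sqrt{2})$, the image contains the sphere $\Gamma_{n_{k+1}}$, whose isometric preimage in parameter space is a sphere $\mathbb{S}_r(b_k) \subset D_{\tau_k}(b_k)$ with $r = 1/(2^{n_{k+1}}\sqrt{2})$. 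For every $\beta \in \mathbb{S}_r(b_k)$, Lemma \ref{Lrhoi}(2) iterated $n_{k+1}$ times gives $R_\beta^{I_k + n_{k+1}+1}(x) \in \Gamma_0 = \mathbb{S}_{1/\sqrt{2}}(0)$.

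For the transit through $\Psi_{m_{k+1}}$, one tracks the three cases of Lemma \ref{LPhi} across the $n_{k+1}$ doubling iterations in the $\Gamma$-cascade. The resulting map $\mathbb{S}_r(b_k) \to \mathbb{S}_{1/\sqrt{2}}(0)$ sends the parameter sphere onto the great sphere, and in particular onto $\Psi_{m_{k+1}} = \mathbb{S}_{\rho_{m_{k+1}}}(c_1^{b_1}) \subset \mathbb{S}_{1/\sqrt{2}}(0)$, because $\rho_{m_{k+1}} > 1/(2\sqrt{2})$ and $|c_1^{b_1}| = 1/\sqrt{2}$. Selecting a sub-disk of $\mathbb{S}_r(b_k)$ whose image lands in $\Psi_{m_{k+1}}$, one then uses Lemma \ref{Lrhoi}(1) to iterate $\Psi_{m_{k+1}} \to \Psi_{m_{k+1}-1} \to \cdots \to \Psi_1 = \mathbb{S}_{1/2}(c_1^{b_1})$, which by the remark after Lemma \ref{LCrit} contains the preimages of $1$. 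A last refinement in parameter space gives $b_{k+1}$ with $R_{b_{k+1}}^{I_{k+1}}(x) \in D_{1/\sqrt{2}}(1)$, completing the itinerary $n_1 m_1 \cdots n_{k+1} m_{k+1}$.

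The main obstacle is justifying that at every intermediate step the map $\Phi_x^{I_k + j}$ really acts faithfully on the shrunk parameter sub-disks, so that preimages of target spheres are again spheres of predictable radius, and not collapsed by the orbit-drift term dominating in Lemma \ref{LPhi}. This amounts to repeating — through one additional block — the same case analysis carried out in the proof of Proposition \ref{PTau}, using the calibrated estimates $1/(2^{n_{k+1}}\sqrt{2}) < \tau_k < \sqrt{2}\rho_{m_k}^2/2^{n_k}$ to guarantee that the parameter-shift term of Lemma \ref{LPhi} dominates at every iteration and the image sub-disks remain inside the expected $\Gamma$-, $\Psi$-, $D_{1/\sqrt{2}}(1)$-, or $D_{1/2}(0)$-neighbourhoods.
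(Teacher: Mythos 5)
Your proposal follows essentially the same route as the paper: both transfer the itinerary construction to parameter space via $\Phi_x^N$, start from $\Phi_x^{I_k}(D_\tau(b_k))=D_\tau(1)$ (Proposition \ref{PTau}), push the sphere $\mathbb{S}_{1/(2^{n_{k+1}}\sqrt{2})}(b_k)$ through the $\Gamma$-cascade onto $\mathbb{S}_{1/\sqrt{2}}(0)$, locate a parameter sub-sphere mapping onto $\Psi_{m_{k+1}}$, descend to $\Psi_1$ to hit a preimage of $1$, and refine once more to get $b_{k+1}$. The "main obstacle" you flag is handled in the paper exactly as you suggest, by reapplying the case analysis of Lemma \ref{LPhi} with the calibration of $\tau_k$.
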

\begin{proof}
Since $x$ is a point with strict itinerary $n_1m_1n_2m_2\ldots n_km_k$ under $R_{b_k}$, then by Proposition \ref{PTau}, $\Phi_x^{I_k}(D_\tau(b_k))=D_\tau(1)$ for all $\tau\leq\tau_k$. 

The above implies that, since $\frac{1}{2^{n_{k+1}}\sqrt{2}}<\tau_k$, then $\Phi_x^{I_k}(\mathbb{S}_\frac{1}{2^{n_{k+1}}\sqrt{2}}(b_k))=\mathbb{S}_\frac{1}{2^{n_{k+1}}\sqrt{2}}(1)$

2. in Lemma \ref{LPhi} implies that $\Phi_x^{I_k+1}(\mathbb{S}_\frac{1}{2^{n_{k+1}}\sqrt{2}}(b_k))=\mathbb{S}_\frac{1}{2^{n_{k+1}}\sqrt{2}}(0)$. Then by 3. in Lemma \ref{LPhi} we have that $\Phi_x^{I_k+n_{k+1}+1}(\mathbb{S}_\frac{1}{2^{n_{k+1}}\sqrt{2}}(b_k))=\mathbb{S}_\frac{1}{\sqrt{2}}(0)$. Therefore, there exists $\beta\in\mathbb{S}_\frac{1}{2^{n_{k+1}}\sqrt{2}}(b_k)$ such that $\Phi_x^{I_k+n_{k+1}+1}(\beta)=c_1^{b_k}$ and hence $$\Phi_x^{I_k+n_{k+1}+1}(D_\frac{1}{2^{n_{k+1}}\sqrt{2}}(\beta))=D_\frac{1}{\sqrt{2}}(c_1^{b_k}).$$Moreover, $$\Phi_x^{I_k+n_{k+1}+1}(\mathbb{S}_\frac{\rho_{m_{k+1}}}{2^{n_{k+1}}}(\beta))=\Psi_{m_{k+1}}.$$

Lemma \ref{LPhi} implies that $$\Phi_x^{I_k+n_{k+1}+3j+1}(\mathbb{S}_\frac{\rho_{m_{k+1}}}{2^{n_{k+1}}}(\beta))=\Psi_{m_{k+1}-j},$$for all $1\leq j< m_{k+1}$. In particular, when $j=m_{k+1}-1$ we have $$\Phi_x^{I_k+n_{k+1}+3(m_{k+1}-1)+1}(\mathbb{S}_\frac{\rho_{m_{k+1}}}{2^{n_{k+1}}}(\beta))=\Psi_1.$$

Therefore, there exists $\beta_2\in\mathbb{S}_\frac{\rho_{m_{k+1}}}{2^{n_{k+1}}}(\beta)$ such that $$\Phi_x^{I_k+n_{k+1}+3(m_{k+1}-1)+1}(\beta_2)=w_1^{b_1},$$and hence




$$\Phi_x^{I_{k+1}}(D_\frac{\rho_{m_{k+1}}}{2^{n_{k+1}}}(\beta_2))=D_\frac{1}{2\sqrt{2}}(1).$$Therefore, there exists $b_{k+1}\in D_\frac{\rho_{m_{k+1}}}{2^{n_{k+1}}}(\beta_2)$ such that $$\Phi_x^{I_{k+1}}(b_{k+1})=1.$$

With this, we conclude that $x$ has strict itinerary $n_1m_1\cdots n_km_kn_{k+1}m_{k+1}$ under $R_{b_{k+1}}$. 
\end{proof}

By induction, using the previous lemma, we can find the parameters $b_2$, $b_3$, etcetera.

\begin{remark}
\label{Rbk}
Note that
\begin{itemize}
\item $b_{k+1}\in D_\frac{\rho_{m_{k+1}}}{2^{n_{k+1}}}(\beta_2)\subset\mathbb{S}_\frac{\rho_{m_{k+1}}}{2^{n_{k+1}}}(\beta)\subset B_\frac{1}{2^{n_{k+1}}\sqrt{2}}(b_k)\subset D_{\tau_k}(b_k)\subset B_{\tau_k}(b_k).$
\item $x$ has itinerary $n_1m_1\cdots n_km_k$ under $R_b$ for all $b\in D_\frac{\rho_{m_{k+1}}}{2^{n_{k+1}}}(\beta_2)$.
\item Since $\tau_{k+1}<\tau_k$ and $b_{k+1}\in D_{\tau_k}(b_k)$ then $D_{\tau_{k+1}}(b_{k+1})\subset D_{\tau_k}(b_k)$. Therefore, if $b\in D_{\tau_{k+1}}(b_{k+1})$ then $b\in D_{\tau_i}(b_i)$ for all $i=1,\ldots,k$.
\item Since $\tau_{k+1}<\tau_k$ and $b_{k+1}\in D_{\tau_k}(b_k)$ then $B_{\tau_{k+1}}(b_{k+1})\subset B_{\tau_k}(b_k)$.
\end{itemize}
\end{remark}

\subsection{Proof of Theorem A}

In this subsection we use previous lemmas and propositions to give a proof for the Theorem A. 

Note that $\lim_{k\to\infty}\tau_k=0$, therefore the intersection $\displaystyle{\bigcap_{k\geq1}}B_{\tau_k}(b_k)$ consists of a unique point, this point will be the desired parameter.   
 
\begin{proof}[Proof of Theorem A]
Let $R_b(z)=\frac{z^2-z}{bz-\frac{1}{a}}$ be the rational function where $a$ is the parameter of the rational function in \eqref{Ecb1} and $$\displaystyle{b=\bigcap_{k\geq1}}B_{\tau_k}(b_k).$$We will prove that $R_b$ contains wandering components in its Fatou set.

By Remark \ref{Rbk}, $b\in D_{\tau_k}(b_k)$ for all $k\geq1$, this implies that $x$ has itinerary $n_1m_1n_2m_2\ldots$ under $R_b$. 

Let us consider the disk $D_r(x)\subset D_\frac{1}{\sqrt{2}}(1)$ with $r<\frac{\rho_{m_1}^2\sqrt{2}}{2^{n_1}}$. We will prove that for $j\geq1$, 
\begin{center}
$R^{I_j}(D_r(x))=D_{r_j}(R^{I_j}(x))$ with $r_j<\frac{\rho_{m_{j+1}}^2\sqrt{2}}{2^{n_{j+1}}}$,
\end{center}
(where $I_j$ is the number defined in \eqref{I}) and therefore $R^j(D_r(x))$ is a disk for all $j\geq1$. Moreover, the disk $D_r(x)$ has itinerary $n_1m_1n_2m_2\ldots$ 

We prove this by induction. First, we prove that $R^{I_1}(D_r(x))=D_{r_1}(R^{I_1}(x))$, with $r_1<\frac{\sqrt{2}\rho_{m_2}^2}{2^{n_2}}$. Since $D_r(x)\subset D_\frac{1}{\sqrt{2}}(1)$ then, by 2. in Lemma \ref{LRb}, $R(D_r(x))=D_r(R(x))$. Then, by 3. in Lemma \ref{LRb} we have that $$R^{n_1+1}(D_r(x))=D_{2^{n_1}r}(R^{n_1+1}(x)).$$ 

By hypothesis $2^{n_1}r<\rho_{m_1}^2\sqrt{2}$, then $R^{n_1+1}(D_r(x))\subset\Psi_{m_1}$ and moreover, by 1. in Lemma \ref{LRb} $$R^{n_1+2}(D_r(x))=D_{2^{n_1}r2\rho_{m_1}^2}(R^{n_1+2}(x)).$$

By 2. and 3. in Lemma \ref{LRb} we have that $R^{n_1+4}(D_r(x))=D_{2^{n_1}r2^2\rho_{m_1}^2}(R^{n_1+4}(x))$. Note that $$2^{n_1}r2^2\rho_{m_1}^2\leq2^{n_1}r<\sqrt{2}\rho_{m_1}^2<\sqrt{2}\rho_{m_1-1},$$therefore $D_{2^{n_1}r2^2\rho_{m_1}^2}(R^{n_1+4}(x))\subset\Psi_{m_1-1},$ and hence 



$$R^{n_1+5}(D_r(x))=D_{2^{n_1}r2^3\rho_{m_1}^2\rho_{m_1-1}^2}(R^{n_1+5}(x)).$$By induction, using Lemma \ref{LRb} we conclude that $$R^{I_1}(D_r(x))=D_{r_1}(R^{I_1}(x)),$$with $r_1=2^{n_1}r2^{2m_1}\rho_{m_1}^2\rho_{m_1-1}^2\cdots\rho_1^2$.

By construction of the sequence $n_1m_1n_2m_2\ldots$, we have that $$2^{2m_1}\rho_{m_1}^2\rho_{m_1-1}^2\cdots\rho_1^2<\frac{1}{2^{n_2+1}}.$$Since $\rho_{m_2}^2>\frac{1}{8}$ and $r<\frac{\rho_{m_1}^2\sqrt{2}}{2^{n_1}}\leq\frac{1}{2^{n_1+1}\sqrt{2}}$, then $$r_1=2^{n_1}r2^{2m_1}\rho_{m_1}^2\rho_{m_1-1}^2\cdots\rho_1^2<\frac{2^{n_1}r}{2^{n_2+1}}<\frac{2^{n_1}\frac{1}{2^{n_1+1}\sqrt{2}}}{2^{n_2+1}}=\frac{1}{2^{n_2}4\sqrt{2}}<\frac{\sqrt{2}\rho_{m_2}^2}{2^{n_2}}.$$

Now, using induction, we prove that the affirmation is true for all $k\geq1$. Let us assume that $R^{I_k}(D_r(x))=D_{r_k}(R^{I_k}(x))$ with $r_k=2^{n_k}r_{k-1}2^{2m_k}\rho_{m_k}^2\rho_{m_k-1}^2\cdots\rho_1^2$ and $r_k<\frac{\sqrt{2}\rho_{m_{k+1}}^2}{2^{n_{k+1}}}$.

Since $R^{I_k}(D_r(x))=D_{r_k}(R^{I_k}(x))\subset D_\frac{1}{\sqrt{2}}(1)$, then using 2. and 3. in Lemma \ref{LRb} $$R^{I_k+n_{k+1}+1}(D_r(x))=D_{2^{n_{k+1}}r_k}(R^{I_k+n_{k+1}+1}(x))\subset\Psi_{m_{k+1}}.$$Note that, $2^{n_{k+1}}r_k<\sqrt{2}\rho_{m_{k+1}}^2$, therefore $$R^{I_k+n_{k+1}+2}(D_r(x))=D_{2^{n_{k+1}}r_k 2\rho_{m_{k+1}}^2}(R^{I_k+n_{k+1}+2}(x))\subset D_\frac{1}{\sqrt{2}}(1),$$and hence $$R^{I_k+n_{k+1}+4}(D_r(x))=D_{2^{n_{k+1}}r_k 2^2\rho_{m_{k+1}}^2}(R^{I_k+n_{k+1}+4}(x))\subset\Psi_{m_{k+1}-1}.$$

Since $r_k<\frac{\sqrt{2}\rho_{m_{k+1}}^2}{2^{n_{k+1}}}$ then $$2^{n_{k+1}}r_k2^{2j}\rho_{m_{k+1}}^2\cdots\rho_{m_{k+1}-j+1}^2<\sqrt{2}\rho_{m_{k+1}-j}^2,$$for all $j\in\{1,2,\ldots,m_{k+1}-1\}$. Therefore, by induction, using Lemma \ref{LRb} we conclude that $$R^{I_{k+1}}(D_r(x))=D_{r_{k+1}}(R^{I_{k+1}}(x)),$$with $r_{k+1}=2^{n_{k+1}}r_k2^{2(m_{k+1})}\rho_{m_{k+1}}^2\cdots\rho_1^2$.

Note that, since $r_k<\frac{\sqrt{2}\rho_{m_{k+1}}^2}{2^{n_{k+1}}}\leq\frac{1}{2^{n_{k+1}+1}\sqrt{2}}$, then
\begin{align*}
r_{k+1}&=2^{n_{k+1}}r_k2^{2(m_{k+1})}\rho_{m_{k+1}}^2\cdots\rho_1^2\\
       &<\frac{2^{n_{k+1}}r_k}{2^{n_{k+2}+1}}<\frac{2^{n_{k+1}}\frac{1}{2^{n_{k+1}+1}\sqrt{2}}}{2^{n_{k+2}+1}}\\
       &=\frac{1}{2^{n_{k+2}}4\sqrt{2}}<\frac{\sqrt{2}\rho_{m_{k+2}}^2}{2^{n_{k+2}}}.
\end{align*}

By induction, $R^{I_j}(D_r(x))=D_{r_j}(R^{I_j}(x))$ with $r_j<\frac{\rho_{m_{j+1}}^2\sqrt{2}}{2^{n_{j+1}}}$. Since $D_r(x)$ has itinerary $n_1m_1n_2m_2,\ldots$ and $n_1>n_2>\cdots$ then $D_r(x)$ is contained in a wandering component. 
\end{proof}

\end{document}